\newcommand{\norm}[1]{\left\lVert #1 \right\rVert}
\newtheorem{theorem}{Theorem}[section]
\newtheorem{lemma}[theorem]{Lemma}
\theoremstyle{definition}
\newtheorem{assumption}[theorem]{Assumption}
\theoremstyle{remark}
\newtheorem{remark}[theorem]{Remark}
\numberwithin{equation}{section}
\title{Online Adjoint Methods for Optimization of PDEs}
\author{Justin Sirignano\footnote{Mathematical Institute, University of Oxford, E-mail: Justin.Sirignano@maths.ox.ac.uk} \phantom{.}  and Konstantinos Spiliopoulos\footnote{Department of Mathematics and Statistics, Boston University, Boston, E-mail: kspiliop@math.bu.edu}
\thanks{K.S. was partially supported by the National Science Foundation (DMS 1550918, DMS 2107856) and  Simons Foundation Award  672441}\\
}
\date{\today}
\begin{document}

\maketitle

\begin{abstract}
We present and mathematically analyze an online adjoint algorithm for the optimization of partial differential equations (PDEs). Traditional adjoint algorithms would typically solve a new adjoint PDE at each optimization iteration, which can be computationally costly. In contrast, an online adjoint algorithm updates the design variables in continuous-time and thus constantly makes progress towards minimizing the objective function. The online adjoint algorithm we consider is similar in spirit to the the pseudo-time-stepping, one-shot method which has been previously proposed. Motivated by the application of such methods to engineering problems, we mathematically study the convergence of the online adjoint algorithm. The online adjoint algorithm relies upon a time-relaxed adjoint PDE which provides an estimate of the direction of steepest descent. The algorithm updates this estimate continuously in time, and it asymptotically converges to the exact direction of steepest descent as $t \rightarrow \infty$. We rigorously prove that the online adjoint algorithm converges to a critical point of the objective function for optimizing the PDE. Under appropriate technical conditions, we also prove a convergence rate for the algorithm. A crucial step in the convergence proof is a multi-scale analysis of the coupled system for the forward PDE, adjoint PDE, and the gradient descent ODE for the design variables.
\end{abstract}
\section{Introduction}

Adjoint methods have been widely-used for the optimization of partial differential equations (PDEs), and especially for optimizing PDEs modeling engineering systems. Examples include \cite{Bueno}, \cite{Cagnetti}, \cite{Giles3}, \cite{Giles0}, \cite{Giles1}, \cite{Giles2}, \cite{Jameson4}, \cite{Jameson6}, \cite{Jameson2}, \cite{Jameson3}, \cite{Jameson5},  \cite{Giles4}, \cite{Protas}, \cite{Jameson1}, \cite{Knopoff2013}, \cite{Brandenburg2009}, and \cite{Hinze2009}.

Traditional adjoint algorithms consist of an iteration where at each iteration a new adjoint PDE must be solved to calculate the gradient descent step. During the course of optimization, many adjoint PDEs must be solved, which in certain cases can be computationally costly. As an alternative, time-stepping and pseudo-time-stepping methods (often in combination with one-shot methods) have been proposed, where one views time-independent PDEs as stationary states of appropriate dynamical systems and studies the behavior of the latter in the long-time regime, i.e., after their transient phase. We refer the interested reader to \cite{Bosse2014,GaugerHambi2012,Gunther2016,Hazra2007,HazraSchultz2004,Kaland2014,Taasan1991,Taasan1995}
and the references therein for certainly a non-exhaustive list of representative references.

In this paper, we  couple a time-relaxed adjoint PDE with a continuous-time update equation for the variables that are being optimized. The time-relaxed adjoint PDE yields an estimate of the direction of steepest descent, and updates this estimate \emph{continuously} in time. The optimization variables are also updated continuously in time using this online estimate of the direction of steepest descent. The focus of our paper is the mathematical analysis of this ``online adjoint algorithm". As $t \rightarrow \infty$, the solution of the time-relaxed adjoint PDE asymptotically matches the exact direction of steepest descent. A crucial step in the convergence proof is a multi-scale analysis of the coupled system for the forward PDE, adjoint PDE, and the gradient descent ODE for the design variables.

We prove convergence and convergence rates for the online adjoint algorithm for a certain class of PDEs. Specifically, in our theoretical analysis, we consider the optimization problem where we seek to minimize the objective function:
\begin{eqnarray}
J(\theta) = \frac{1}{2} \int_{U} \bigg{(} u^{\ast}(x) - h(x) \bigg{)}^2 dx + \frac{\gamma}{2} \norm{\theta}^2_2,
\label{ObjectivePDEIntro}
\end{eqnarray}
where $h$ is a target profile.  $ \gamma \norm{\theta}^2_2$ is a regularization term where $\gamma > 0$ and $\norm{ \cdot}_2$ is the $\ell_2$ norm. $u^{\ast}$ satisfies the elliptic PDE
\begin{align}
A u^{\ast}(x) &= f( x, \theta ), \quad x\in U\nonumber\\
u^{\ast}(x)&= 0,\quad x\in \partial U,
\label{EllipticPDEIntro}
\end{align}
where $A$ is a standard second-order elliptic operator.
Thus, we wish to select a parameter $\theta$ such that the solution $u^{\ast}$ of the PDE (\ref{EllipticPDEIntro}) is as close as possible to the target profile $h$.

If $A^{\dagger}$ denotes the formal adjoint operator to $A$, then the adjoint PDE is
\begin{align}
A^{\dagger} \hat u^{\ast}(x)&= u^{\ast}(x) - h(x), \quad x\in U\nonumber\\
\hat u^{\ast}(x)&= 0,\quad x\in \partial U
\label{AdjointPDEIntro}
\end{align}

The gradient of the objective function (\ref{ObjectivePDEIntro}) can be evaluated using the solution $\hat u^{\ast}$ to the adjoint PDE (\ref{AdjointPDEIntro}). By Lemma \ref{Eq:ObjFcnRep} we have that
\begin{eqnarray}
\nabla_{\theta} J(\theta) = \int_{U} \hat u^{\ast}(x) \nabla_{\theta} f(x, \theta) dx + \gamma \theta.
\label{GradIntro}
\end{eqnarray}

Thus, the adjoint PDE (\ref{AdjointPDEIntro}) can be used to evaluate the gradient of the objective function, which in turn can be used to optimize over the PDE (\ref{EllipticPDEIntro}). A key advantage of adjoint methods is that, no matter how large the dimension of $\theta$ is, the adjoint PDE (\ref{AdjointPDEIntro}) is the same dimension as the original PDE (\ref{EllipticPDEIntro}).

\subsection{The online adjoint algorithm}
The online adjoint algorithm  optimizes the objective function $J(\theta)$ via a continuous-time equation for the update of the parameter $\theta(t)$; see also \cite{HazraSchultz2004,Taasan1995} for related formulations.
The direction of steepest descent is estimated using a time-relaxation of the adjoint PDE. The estimate and the optimization variables are both simultaneously updated continuously in time. An appropriately chosen learning rate parameter is introduced, which allows to guarantee both well posedness of the algorithm for all times (Theorem \ref{T:WellPosednessThm}) and convergence as $t\rightarrow\infty$ (Theorems \ref{ConvergenceTheorem} and \ref{T:ConvergenceRate}). 

The online adjoint algorithm satisfies the equations:
\begin{align}
\frac{\partial u}{\partial t}(t,x) &= -A u(t,x) + f(x,\theta(t)), \quad x\in U, \phantom{.} t>0 \notag \\
\frac{\partial \hat u}{dt}(t,x) &= -A^{\dagger} \hat u(t,x)  + ( u(t,x) - h(x)), \quad x\in U, \phantom{.} t>0 \notag \\
\frac{d \theta}{dt}(t) &= - \alpha(t) \bigg{(} \int_{U} \hat u(t,x) \nabla_{\theta} f (x,  \theta(t) ) dx + \gamma \theta(t) \bigg{)}\nonumber\\
u(t,x)&=\hat{u}(t,x)=0, \quad x\in\partial U, \phantom{.} t>0\nonumber\\
u(0,x)&=u_{0}(x), \hat{u}(0,x)=\hat{u}_{0}(x),
\label{TimeRIntro}
\end{align}
where $\alpha(t)$ is an appropriately chosen learning rate. The PDEs for $u$ and $\hat u$ can be viewed as time relaxations of the PDE (\ref{EllipticPDEIntro}) and its adjoint PDE (\ref{AdjointPDEIntro}). It is easy to see that $\int_{U} \hat u(t,x) \nabla_{\theta} f (x,  \theta(t) ) dx$ is an estimate for the direction of steepest descent $\int_{U} \hat u^{\ast}(x) \nabla_{\theta} f (x,  \theta(t) ) dx$.

Apart from the generic formulation of the online adjoint algorithm as presented in (\ref{TimeRIntro}), our main contribution is two-fold. First, we prove that as $t\rightarrow\infty$, $\norm{\nabla J(\theta(t))}\rightarrow 0$. Namely, we prove that $\theta(t)$ converges to a stationary point of $J(\theta)$. We emphasize here that in order to do so, no assumptions on convexity of $J$ are needed. Secondly, if we further assume that $J(\theta)$ is strongly convex, then  we also prove a convergence rate of $\theta(t)$ to the global minimum of $J(\theta)$.

In practice, the online adjoint algorithm (\ref{TimeRIntro}) is implemented by simultaneously solving the coupled ODE-PDE system using numerical methods such as finite-difference methods. Either explicit or implicit finite difference methods can be used. For example, an explicit finite difference method for implementing (\ref{TimeRIntro}) would be:
\begin{itemize}
\item Update $u$ and $\hat u$:
\begin{eqnarray}
u(t + \Delta, x) &=& u(t,x) + \bigg{(} - A u(t,x) + f(x,\theta(t)) \bigg{)} \Delta, \notag \\
\hat u(t + \Delta,x) &=& \hat u(t,x) + \bigg{(} -A^{\dagger} \hat u(t,x)  + ( u(t,x) - h(x)) \bigg{)} \Delta,
\end{eqnarray}
where $\Delta$ is the time-step size.
\item Then, update the parameter $\theta$:
\begin{eqnarray}
\theta(t+\Delta) = \theta(t) - \alpha(t) \bigg{(}  \int_{U} \hat u(t,x) \nabla_{\theta} f (x,  \theta(t) ) dx + \gamma \theta(t) \bigg{)} \Delta.
\end{eqnarray}
The spatial domain $x$ is discretized and a finite-difference method is used to approximate the operator $A$. The integrals are discretized as appropriate sums.
\end{itemize}

The focus of our paper is to rigorously prove the convergence of the online adjoint algorithm for linear elliptic PDEs. In practice, real-world applications will typically require optimizing over nonlinear PDEs. The online adjoint algorithm can also be used to optimize over nonlinear PDEs. \cite{SirignanoMacArtSpiliopoulos2021} and \cite{SirignanoMacArtPanesi} optimize over the Navier-Stokes equation using our online adjoint algorithm. Numerical optimization with pseudo-time-stepping adjoint methods has also been studied in \cite{Bosse2014,GaugerHambi2012,Gunther2016,Hazra2007,HazraSchultz2004,Kaland2014,Taasan1991,Taasan1995}.

We demonstrate the online adjoint method below for a simple example of a nonlinear PDE. Consider the equation
\begin{eqnarray}
0 &=& - \theta_1 u \frac{\partial u}{\partial x} - \theta_2 u \frac{\partial u}{\partial y} + \frac{\partial^2 u}{\partial x^2} + \frac{\partial^2 u}{\partial y^2},
\label{Burgers}
\end{eqnarray}
where $(x,y) \in [0,1] \times [0,1]$ and with boundary conditions $u(0, y) = 1$, $u(1,y) = -1$, $u(x, 0) = 1$, and $u(x,1) = -1$. The parameters to be optimized over are $\theta = (\theta_1, \theta_2)$ and the objective function is (\ref{ObjectivePDEIntro}) with $\gamma = 0$. The target function $h$ is the solution to (\ref{Burgers}) with $\theta = (10, 10)$. That is, our goal is to solve the inverse problem of recovering the parameters in the PDE (\ref{Burgers}) given an observed solution.

The adjoint PDE for (\ref{Burgers}) is
\begin{eqnarray}
0&=&   \theta_1  u \frac{\partial \hat u }{\partial x}    + \theta_2  u \frac{\partial \hat u }{\partial y}  + \frac{\partial^2 \hat u}{\partial x^2} + \frac{\partial^2 \hat u}{\partial y^2} + (u - h),
\label{BurgersAdjointPDE}
\end{eqnarray}
where $(x,y) \in [0,1] \times [0,1]$ and with boundary conditions $\hat u(0, y) = \hat u(1,y) = \hat u(x, 0) = \hat u(x,1) = 0$. The gradient of the objective function is given by the formula
\begin{eqnarray}
\frac{\partial J(\theta) }{\partial \theta_1} &=& - \int_{0}^1 \int_0^1  \hat u u \frac{\partial u}{\partial x} (x,y) dx dy, \notag \\
\frac{\partial J(\theta) }{\partial \theta_2} &=& - \int_{0}^1 \int_0^1  \hat u u \frac{\partial u}{\partial y} (x,y) dx dy.\nonumber
\end{eqnarray}
The online adjoint algorithm can be used to minimize the objective function $J(\theta)$. In our numerical experiment, we use an explicit finite difference method for the numerical solution of the time-relaxed PDE and the time-relaxed adjoint PDE. The PDE variables are updated using an Euler scheme. (Although not implemented here, it is worthwhile noting that higher-order accuracy in time could be achieved with a Runge-Kutta scheme.) Uniform mesh sizes for both time and space are chosen. The PDE operator is approximated using a second-order accurate finite-difference method. The parameter ODEs are also solved using an explicit Euler scheme on the same uniform time grid and the spatial integrals are also discretized as sums using the uniform spatial grid. Figure \ref{OAFig2} demonstrates that the online adjoint algorithm converges to the correct value for the parameters $\theta$ as $t \rightarrow \infty$. The right display in Figure \ref{OAFig2} presents the numerical convergence rate, which satisfies the theoretical convergence rate of $t^{-\frac {1}{2}}$ which we prove for strongly convex objective functions for linear elliptic PDEs in this paper. (In fact, for this specific example, the numerical convergence rate turns out to be faster than $t^{- \frac{1}{2}}$.)
\begin{figure}[!h]
  \centering
   \includegraphics[width=0.4\textwidth]{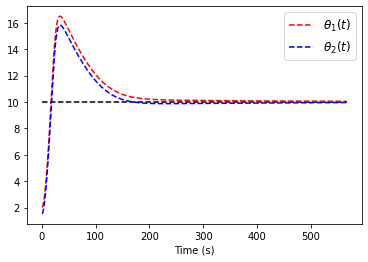}
      \includegraphics[width=0.4\textwidth]{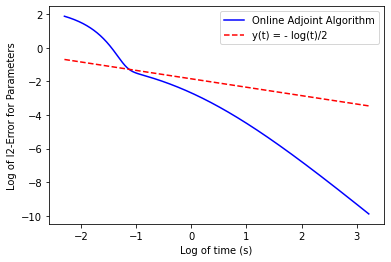}
 \caption{Left: solution for $\theta$ using the online adjoint algorithm versus computational time. Right: numerical convergence rate for $\norm{ \theta - \theta^{\ast}}_2$ where $\theta^{\ast} = (10, 10)$. }
\label{OAFig2}
\end{figure}

\subsection{Organization of the Proof}

The rest of the paper is organized as follows. In Section \ref{S:WellPosedness} we state our assumptions, present in more details the online adjoint algorithm and prove its well posedness in Theorem \ref{T:WellPosednessThm}. Convergence of $\theta(t)$ to a stationary point of $J(\theta)$ is proven in Section \ref{S:ConvergenceStationaryPoint}, Theorem \ref{ConvergenceTheorem}. We emphasize that no convexity requirements on $J(\theta)$ are needed in order to prove convergence. If in addition, we assume that $J(\theta)$ is strongly convex with a single stationary point, then one can prove a convergence rate, Theorem \ref{T:ConvergenceRate}. The latter is the content of Section \ref{S:ConvergenceRate}. %Section \ref{S:NumericalExamples} includes a few numerical studies to illustrate the theoretical results.

\section{Assumptions, notation and well posedness of the online adjoint algorithm} \label{S:WellPosedness}

Let $U$ be an open, bounded subset of $\mathbb{R}^{n}$. We will denote by $(\cdot,\cdot)$ the usual inner product in $H=L^{2}(U)$. We shall assume that the operator $A$ is uniformly elliptic, diagonalizable and dissipative, per Assumption \ref{A:Assumption0}.
\begin{assumption} \label{A:Assumption0}
The operator $A$ is uniformly elliptic. We also assume that $A$ is diagonalizable and dissipative. Namely there exists a countable complete orthonormal basis $\{e_{n}\}_{n\in\mathbb{N}}\subset H$ that consists of  eigenvectors of $A$ corresponding to a non-negative sequence $\{\lambda_{n}\}_{n\in\mathbb{N}}$ of eigenvalues such that
\[
(-A)e_{n}=-\lambda_{n}e_{n}, \quad n\in\mathbb{N}
\]
such that the dissipativity condition $\lambda= \displaystyle \inf_{n\in\mathbb{N}} \lambda_{n}>0$ holds.
\end{assumption}

An example of $A$ is the second order elliptic operator, for definiteness taken to be in divergence form,
\begin{align*}
Au(x)=-\sum_{i,j=1}^{n}\left(a^{i,j}(x)u_{x_{i}}(x)\right)_{x_{j}}+\sum_{i=1}^{n}b^{i}(x)u_{x_{i}}(x)+c(x)u(x).
\end{align*}

The diagonalizable condition of Assumption \ref{A:Assumption0} is automatically satisfied for example by self-adjoint operators, see \cite[Theorem 8.8.37]{GilbargTrudinger}

Before proceeding with the well-posedness of the online adjoint algorithm, let us recall a few basic results that will be useful for the analysis that follows.

Taking the domain of $A$ to be $D(A)=H^{1}_{0}(U)\cap H^{2}(U)$, we have that it is dense in $H=L^{2}(U)$. Then, due to Assumption \ref{A:Assumption0}, elliptic regularity theory gives that the operator A is closed and thus by Hille-Yoshida theorem $(-A)$ is the generator of an analytic strongly contraction semigroup $\{S(t)\}_{t\geq 0}$ on $H$. The spectral assumption made in Assumption \ref{A:Assumption0}  guarantees that
\begin{align}
\norm{S(t)u}_{H}&\leq e^{-\lambda t}\norm{u}_{H}.\label{Eq: ExponentialStability}
\end{align}

The latter also means that  $A$ is a coercive operator. In particular, we will heavily use the fact that for $u\in D(A)$, we have
\begin{align}
( A u, u)&\geq\lambda\norm{u}^{2}_{H}. \label{Eq:CoerciveA}
\end{align}

Notice now that because we are dealing with a real Hilbert space and because $H^{1}_{0}(U)\cap H^{2}(U)$ is dense in $L^{2}(U)$, we obtain that $A^{\dagger}$ is also a coercive operator. Indeed, by definition of the adjoint operator  $A^{\dagger}$ we shall have that for $u\in H^{1}_{0}(U)\cap H^{2}(U)$
\begin{align}
( A^{\dagger} u, u ) &= (  u, A u ) \geq \lambda ( u, u ).\label{Eq:CoercivityAdjoint}
\end{align}

Notice now that under our assumptions the adjoint operator $(-A^{\dagger})$ will also generated an analytic strongly continuous semigroup $\{S^{\dagger}(t)\}_{t\geq 0}$ on $L^{2}(U)$. In particular (\ref{Eq:CoercivityAdjoint}) implies that the adjoint semigroup $S^{\dagger}(t)$ will also be exponential stable. Indeed, by definition we have for $u\in H^{1}_{0}(U)\cap H^{2}(U)$
\begin{align}
\frac{d}{dt}\norm{S^{\dagger}(t)u}^{2}_{H}&= -2( A^{\dagger} S^{\dagger}(t)u,S^{\dagger}(t)u)\leq -2\lambda \norm{S^{\dagger}(t)u}^{2}_{H},
\end{align}
which then due to Gronwall lemma gives
\begin{align}
\norm{S^{\dagger}(t)u}_{H}&\leq e^{-\lambda t}\norm{u}_{H},\label{Eq:ExponentiaStableAdjoint}
\end{align}
proving the exponential stability of $S^{\dagger}(t)$.

Then, if we assume that  $f\in L^{2}(U)$,  classical Lax-Milgram theorem (see for example Chapter 5.8 of \cite{GilbargTrudinger}) says that the elliptic boundary-value problem
\begin{align}
A u^{\ast}(x) &= f( x ), \quad x\in U\nonumber\\
u^{\ast}(x)&= 0,\quad x\in \partial U
\label{EllipticPDE}
\end{align}
has a unique weak solution $u^{\ast}\in H^{1}_{0}(U)$.  The same conclusion will also be true for the adjoint problem governed by the adjoint operator $A^{\dagger}$.

\begin{remark}\label{R:GlobalRegularity}
We also recall here that by classical elliptic regularity results if for given $m\in\mathbb{N}$, $a^{i,j},b^{i},c\in\mathcal{C}^{m+1}(\bar{U})$ with $i,j=1,\cdots,n$ and $\partial U\in\mathcal{C}^{m+2}$, then the unique solution $u^{\ast}$ to (\ref{EllipticPDE}) is such that $u^{\ast}\in H^{m+2}(U)$. Clearly if $a^{i,j},b^{i},c\in\mathcal{C}^{\infty}(\bar{U})$ with $i,j=1,\cdots,n$ and $\partial U\in\mathcal{C}^{\infty}$, then $u^{\ast}\in \mathcal{C}^{\infty}(\bar{U})$.  We refer the interested reader to classical manuscripts, e.g., \cite[Chapter 8]{GilbargTrudinger}, for more details.
\end{remark}

\begin{remark}\label{R:NonZeroBoundaryData}
For notational convenience and without loss of generality,  we have assumed  zero boundary conditions for the PDE (\ref{EllipticPDE}). We can consider the PDE (\ref{EllipticPDE}) with non-zero boundary data, say $u^{\ast}(x)= g(x), x\in \partial U$, under the assumption $g\in H^{1}(U)$ for unique solvability of the corresponding PDE (\ref{EllipticPDE}). 
 We refer the interested reader to classical manuscripts, e.g., \cite[Chapter 8]{GilbargTrudinger}, for more details.
\end{remark}

As briefly presented in the introduction now, let $f(\cdot,\cdot):\mathbb{R}^{n}\times\mathbb{R}^{d}\mapsto \mathbb{R}$ be such that for every $\theta\in\mathbb{R}^{d}$ $f(\cdot,\theta)\in L^{2}(U)$. As with (\ref{EllipticPDE}) the linear PDE
\begin{align}
A u^{\ast}(x) &= f( x,\theta ), \quad x\in U\nonumber\\
u^{\ast}(x)&= 0,\quad x\in \partial U
\label{ForwardPDE}
\end{align}
will have, for each given $\theta\in\mathbb{R}^{d}$, a unique weak solution $u^{\ast}\in H^{1}_{0}(U)$. We shall write $u^{\ast}(x;\theta)$ when we want to emphasize the dependence on $\theta$.

For a given target profile $h\in L^{2}(U)$, the goal is to select $\theta$ to minimize the objective function
\begin{eqnarray}
J(\theta) = \frac{1}{2} ( u^{\ast} - h, u^{\ast} - h ) + \frac{\gamma}{2} \norm{\theta}^2_2,
\label{ObjectiveODE}
\end{eqnarray}
where  $\gamma > 0$.  $ \gamma \norm{\theta}^2_2$ is a regularization term and $\norm{ \cdot}_2$ is the $\ell_2$ norm.

The adjoint PDE satisfies
\begin{align}
A^{\dagger} \hat u^{\ast}(x)&= u^{\ast}(x) - h(x), \quad x\in U\nonumber\\
\hat u^{\ast}(x)&= 0,\quad x\in \partial U
\label{AdjointODE}
\end{align}
and as with (\ref{EllipticPDE}), Assumption \ref{A:Assumption0} and the fact that $u^{\ast} - h\in L^{2}(U)$ guarantee that (\ref{AdjointODE})  has a unique weak solution $\hat u^{\ast}\in H^{1}_{0}(U)$. Notice that since $u^{\ast}$ depends on $\theta$, the same will also be true for $\hat u^{\ast}$ and we shall write $\hat u^{\ast}(x;\theta)$ when we want to emphasize that. The following lemma provides a useful representation for $\nabla_{\theta} J(\theta)$, which will also motivate the form of the online adjoint algorithm.
\begin{lemma}\label{Eq:ObjFcnRep}
Let Assumption \ref{A:Assumption0} and assume that  $h\in L^{2}(U)$. Then, we can write
\begin{eqnarray}
\nabla_{\theta} J(\theta) = (  \hat u^{\ast} , \nabla_{\theta} f(\theta)) + \gamma \theta.
\label{GradientObj}
\end{eqnarray}
\end{lemma}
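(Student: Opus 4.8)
The plan is to differentiate $J$ directly in $\theta$, treating the two summands separately. The regularization term contributes $\nabla_\theta\big(\tfrac{\gamma}{2}\norm{\theta}_2^2\big)=\gamma\theta$ immediately, so the real work is in the data-misfit term $\tfrac12(u^\ast-h,u^\ast-h)$. Writing $u^\ast=u^\ast(\cdot;\theta)$ and assuming (as we must) that $f(x,\cdot)$ is $C^1$ and that $\theta\mapsto u^\ast(\cdot;\theta)$ is differentiable into $H$, the chain rule in the Hilbert space $H$ gives, for each coordinate $\theta_k$,
\[
\frac{\partial}{\partial\theta_k}\,\frac12(u^\ast-h,u^\ast-h)=\Big(u^\ast-h,\frac{\partial u^\ast}{\partial\theta_k}\Big).
\]

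Next I would identify the sensitivity $\partial u^\ast/\partial\theta_k$ through its own PDE. Since $Au^\ast(\cdot;\theta)=f(\cdot,\theta)$ holds for every $\theta$ with $u^\ast=0$ on $\partial U$, differentiating the equation (justified below) yields that $w_k:=\partial u^\ast/\partial\theta_k$ solves the linear elliptic problem $Aw_k=\partial f/\partial\theta_k$ in $U$ with $w_k=0$ on $\partial U$; by Lax--Milgram this has a unique weak solution in $H^1_0(U)$. The key manipulation is then the adjoint identity: using $A^\dagger\hat u^\ast=u^\ast-h$ together with the definition of the formal adjoint and the homogeneous boundary data of both $\hat u^\ast$ and $w_k$ (so that no boundary terms appear, exactly as in the derivation of coercivity for $A^\dagger$ in the excerpt),
\[
\Big(u^\ast-h,\frac{\partial u^\ast}{\partial\theta_k}\Big)=(A^\dagger\hat u^\ast,w_k)=(\hat u^\ast,Aw_k)=\Big(\hat u^\ast,\frac{\partial f}{\partial\theta_k}\Big).
\]
Assembling the coordinates into a gradient vector gives $\nabla_\theta J(\theta)=(\hat u^\ast,\nabla_\theta f(\theta))+\gamma\theta$, as claimed.

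The main obstacle — everything else being routine — is rigorously justifying the differentiation of $\theta\mapsto u^\ast(\cdot;\theta)$ and the resulting interchange of $\partial_{\theta_k}$ with the operator $A$. Here the linear structure is decisive: since $\theta$ enters only through the forcing, $u^\ast(\cdot;\theta)=A^{-1}f(\cdot,\theta)$, where $A^{-1}:L^{2}(U)\to H^1_0(U)$ is the bounded solution operator supplied by Lax--Milgram. Thus the difference quotient $\big(u^\ast(\cdot;\theta+s e_k)-u^\ast(\cdot;\theta)\big)/s$ equals $A^{-1}$ applied to the difference quotient of $f$, and boundedness of $A^{-1}$ transfers the $C^1$ differentiability of $f(x,\cdot)$ (with $\partial f/\partial\theta_k\in L^{2}(U)$) directly to $u^\ast$, giving $w_k=A^{-1}(\partial f/\partial\theta_k)$ in $H^1_0(U)$ and hence $Aw_k=\partial f/\partial\theta_k$. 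This is the only place where a smoothness hypothesis on $f$ in $\theta$ is needed, and it is precisely what makes the formal computation above legitimate.
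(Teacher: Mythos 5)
Your proposal is correct and follows essentially the same route as the paper's proof: differentiate the forward PDE to obtain the sensitivity equation $A\tilde u=\nabla_\theta f$ with zero boundary data, then use the adjoint identity $(\hat u^{\ast},A\tilde u)=(A^{\dagger}\hat u^{\ast},\tilde u)$ together with $A^{\dagger}\hat u^{\ast}=u^{\ast}-h$ and the chain rule to conclude. The only difference is that you additionally justify the differentiability of $\theta\mapsto u^{\ast}(\cdot;\theta)$ via the bounded solution operator $A^{-1}$ from Lax--Milgram, a point the paper's proof treats as formal; this is a welcome strengthening, not a different argument.
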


The proof of Lemma \ref{Eq:ObjFcnRep} is presented at the end of this section. In terms of the learning rate $\alpha(t)$ we make the following assumption.
\begin{assumption}\label{A:LearningRateAssumption}
We assume that the learning rate $\alpha(t)$ is such that $\displaystyle \lim_{t\rightarrow\infty}\alpha(t)=0$ and
\begin{itemize}
\item $\int_{0}^{\infty}\alpha(s)ds=\infty$ and $\int_{0}^{\infty}\alpha^{2}(s)ds<\infty$.
\item $\displaystyle \sup_{t\geq 0} \int_{0}^{t}\alpha(s)e^{-\gamma\int_{s}^{t}\alpha(r)dr}ds<\infty$ and $\displaystyle \lim_{t\rightarrow\infty}\frac{\alpha'(t)}{\alpha(t)}=0$.
\end{itemize}
\end{assumption}

The first part of Assumption \ref{A:LearningRateAssumption} on the learning rate is classical and is also the same used in discrete time algorithms, see for example classical references such as \cite{Benveniste,KushnerYin}. The second part of Assumption \ref{A:LearningRateAssumption} comes up while proving that $\norm{\theta(t)}$ stays bounded for all times and later on in the convergence proof of $\theta(t)$ to a stationary point of $J$. An example of a learning rate that satisfies both parts of Assumption \ref{A:LearningRateAssumption} is $\alpha(t)=\frac{1}{1+t}$.

In terms of the parametric model $f(\cdot,\cdot)$   we make the following assumption
\begin{assumption} \label{AssumptionLinear1}
We assume the following conditions:
\begin{itemize}
\item For each fixed $\theta\in\mathbb{R}^{d}$, $f(\cdot, \theta)$, $\nabla_{\theta} f(\cdot, \theta)$ and $\nabla^{2}_{\theta} f(\cdot, \theta)$ are in $L^{2}(U)$. For each fixed $x\in\mathbb{R}^{n}$, $f(x,\cdot)$, $\nabla_{\theta} f(x, \cdot)$ and $\nabla^{2}_{\theta} f(x, \cdot)$ are bounded. In other words we assume that there exists $C<\infty$ such that
    \[
    \sup_{\theta\in\mathbb{R}^{d}}\left(\norm{f(\theta)}_{L^{2}(U)}+\norm{\nabla_{\theta}f(\theta) }_{L^{2}(U)}+\norm{\nabla^{2}_{\theta}f(\theta) }_{L^{2}(U)}\right)\leq C.
    \]
\item For each $x\in U$, $f(x,\cdot)$ is globally Lipschitz $L^{2}$ Lipschitz constant in $x$.
\end{itemize}
\end{assumption}

In terms of the associated cost function $J(\cdot)$ we make the following Assumption \ref{AssumptionLinear2}.
\begin{assumption} \label{AssumptionLinear2}
We assume that $J(\cdot) \in C^2$ and globally Lipschitz.
\end{assumption}

We emphasize that Assumption \ref{AssumptionLinear2} does not impose any convexity type of assumptions on $J(\theta)$. The online adjoint algorithm satisfies the time-dependent PDEs
\begin{align}
\frac{\partial u}{\partial t}(t,x) &= -A u(t,x) + f(x,\theta(t)), \quad x\in U, t>0 \notag \\
\frac{\partial \hat u}{dt}(t,x) &= -A^{\dagger} \hat u(t,x)  + ( u(t,x) - h(x)), \quad x\in U, t>0 \notag \\
\frac{d \theta}{dt}(t) &= - \alpha(t) \bigg{(} ( \nabla_{\theta} f ( \theta(t) ), \hat u(t) ) + \gamma \theta(t) \bigg{)}\nonumber\\
u(t,x)&=\hat{u}(t,x)=0, \quad x\in\partial U, t>0\nonumber\\
u(0,x)&=u_{0}(x), \hat{u}(0,x)=\hat{u}_{0}(x)
\label{TimeR}
\end{align}

Notice that (\ref{TimeR}) is a non-local coupled system of PDEs. Theorem \ref{T:WellPosednessThm} is about the well-posedness of system (\ref{TimeR}). Also, with slight abuse of notation, with $\theta=\theta(t)$ from (\ref{TimeR}) we shall denote the solutions to (\ref{ForwardPDE}) and (\ref{AdjointODE}), by $u^{\ast}(t,x)$ and $\hat u^{\ast}(t,x)$ respectively.

\begin{theorem} \label{T:WellPosednessThm}
Assume Assumptions \ref{A:Assumption0}, \ref{A:LearningRateAssumption} and \ref{AssumptionLinear1} and that $u_{0},
\hat{u}_{0}, h\in L^{2}(U)$. There exists a unique mild solution $u,\hat{u}\in \mathcal{C}((0,\infty); W^{2,2}_{0}(U))\cap \mathcal{C}^{1}((0,\infty);L^{2}(U))$ and $\theta\in\mathcal{C}^{1}((0,\infty))$ to equation (\ref{TimeR}). If in addition $h\in L^{\infty}$, then $u,\hat{u}\in \mathcal{C}((0,\infty); W^{2,p}_{0}(U))\cap \mathcal{C}^{1}((0,\infty);L^{p}(U))$ for any $p\geq 2$ and if further we assume that $u_{0},\hat{u}_{0}\in W^{2,p}(U)$, then $u,\hat{u}\in \mathcal{C}([0,\infty); W^{2,p}_{0}(U))\cap \mathcal{C}^{1}([0,\infty);L^{p}(U))$ for any $p\geq 2$. In addition, we have that there exists some constant $K<\infty$ such that
\begin{eqnarray}
\sup_{t \geq 0} \left[ \norm{u(t)}_{L^{2}(U)}+\norm{\hat{u}(t)}_{L^{2}(U)}+\norm{\theta(t)}_2 \right]< K.
\label{UnitormThetaBound}
\end{eqnarray}
\end{theorem}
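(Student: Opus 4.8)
The plan is to recast the coupled system (\ref{TimeR}) in its mild (Duhamel) form, obtain a local solution by a Banach fixed-point argument, and then promote it to a global one using a priori bounds that simultaneously yield (\ref{UnitormThetaBound}). Writing $S(t)$ and $S^{\dagger}(t)$ for the analytic contraction semigroups generated by $-A$ and $-A^{\dagger}$, a triple $(u,\hat u,\theta)$ is a mild solution on $[0,T]$ precisely when
\begin{align}
u(t) &= S(t)u_{0} + \int_{0}^{t} S(t-s) f(\theta(s))\, ds, \notag\\
\hat u(t) &= S^{\dagger}(t)\hat u_{0} + \int_{0}^{t} S^{\dagger}(t-s)\big( u(s) - h \big)\, ds, \notag\\
\theta(t) &= \theta_{0} - \int_{0}^{t}\alpha(s)\Big[ (\nabla_{\theta} f(\theta(s)), \hat u(s)) + \gamma\theta(s) \Big]\, ds. \notag
\end{align}
I would define the solution map $\Phi$ sending $(u,\hat u,\theta)$ to these right-hand sides and seek a fixed point in $\mathcal{C}([0,T];L^{2}(U))^{2}\times \mathcal{C}([0,T];\mathbb{R}^{d})$. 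Using $\norm{S(t)}\le 1$ and $\norm{S^{\dagger}(t)}\le 1$ from (\ref{Eq: ExponentialStability}) and (\ref{Eq:ExponentiaStableAdjoint}), the Lipschitz dependence of $f(\theta)$ and of $\nabla_{\theta}f(\theta)$ on $\theta$ in $L^{2}(U)$ (furnished by the $L^{2}$-bounds on $\nabla_{\theta}f$ and $\nabla^{2}_{\theta}f$ in Assumption \ref{AssumptionLinear1}), and the fact that $\int_{0}^{T}\alpha(s)\,ds\to 0$ as $T\to 0$, one verifies that $\Phi$ is a contraction for $T$ small. Crucially, the admissible $T$ depends only on the fixed structural constants, so the local existence time does not shrink along the flow, which is what permits continuation.

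First I would record the state bounds. Because $\norm{f(\theta)}_{L^{2}(U)}\le C$ uniformly in $\theta$ by Assumption \ref{AssumptionLinear1}, (\ref{Eq: ExponentialStability}) gives
\[
\norm{u(t)}_{L^{2}(U)} \le e^{-\lambda t}\norm{u_{0}}_{L^{2}(U)} + \int_{0}^{t} e^{-\lambda(t-s)} C\, ds \le \norm{u_{0}}_{L^{2}(U)} + \frac{C}{\lambda},
\]
uniformly in $t\ge 0$. Substituting this into the $\hat u$-equation and using (\ref{Eq:ExponentiaStableAdjoint}) bounds $\norm{\hat u(t)}_{L^{2}(U)}$ by a constant $M<\infty$ uniform in $t$, since $h\in L^{2}(U)$.

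The heart of the argument, and the step I expect to be the main obstacle, is the uniform-in-time bound on $\theta(t)$; this is exactly where the second part of Assumption \ref{A:LearningRateAssumption} is used. By Cauchy-Schwarz and $\norm{\nabla_{\theta}f(\theta)}_{L^{2}(U)}\le C$, the driving term satisfies $\norm{(\nabla_{\theta}f(\theta(t)),\hat u(t))}_{2}\le C\norm{\hat u(t)}_{L^{2}(U)}\le CM$. Differentiating $\norm{\theta}_{2}$ along the flow and using the regularization,
\[
\frac{d}{dt}\norm{\theta(t)}_{2} \le -\gamma\alpha(t)\norm{\theta(t)}_{2} + \alpha(t)\,CM,
\]
so Gronwall's inequality gives
\[
\norm{\theta(t)}_{2} \le e^{-\gamma\int_{0}^{t}\alpha(s)\,ds}\norm{\theta_{0}}_{2} + CM\int_{0}^{t}\alpha(s)\,e^{-\gamma\int_{s}^{t}\alpha(r)\,dr}\,ds.
\]
The first term is at most $\norm{\theta_{0}}_{2}$, and the second is bounded uniformly in $t$ precisely because $\sup_{t\ge 0}\int_{0}^{t}\alpha(s)e^{-\gamma\int_{s}^{t}\alpha(r)dr}ds<\infty$. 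Combining the three estimates establishes (\ref{UnitormThetaBound}); note that without both the regularization $\gamma>0$ and this integrability hypothesis the $\theta$-dynamics could a priori grow, so this is genuinely the delicate point.

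With the uniform a priori bounds in hand, global existence follows by the standard continuation argument: the solution cannot blow up in finite time, so the local solution extends to $[0,\infty)$, and uniqueness on overlaps follows from the same contraction and Gronwall estimates. For the regularity claims I would exploit the smoothing of analytic semigroups: for $t>0$ the Duhamel integrals take values in $D(A)=W^{2,2}_{0}(U)$, yielding $u,\hat u\in\mathcal{C}((0,\infty);W^{2,2}_{0}(U))\cap\mathcal{C}^{1}((0,\infty);L^{2}(U))$. When $h\in L^{\infty}(U)$, $-A$ and $-A^{\dagger}$ also generate analytic semigroups on $L^{p}(U)$ for every $p\ge 2$, and $L^{p}$ parabolic regularity (bootstrapping the forcing $u-h$ through the Sobolev embedding of $W^{2,2}_{0}(U)$) upgrades the solutions to $\mathcal{C}((0,\infty);W^{2,p}_{0}(U))\cap\mathcal{C}^{1}((0,\infty);L^{p}(U))$; if in addition $u_{0},\hat u_{0}\in W^{2,p}(U)$, the estimates extend up to $t=0$, giving continuity on $[0,\infty)$.
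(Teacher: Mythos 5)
Your proposal is correct and follows essentially the same route as the paper: a mild (Duhamel) formulation with a fixed-point argument for local existence (the paper phrases this via Picard--Lindel\"{o}f on a vector-valued evolution equation, which is the same contraction argument), uniform a priori bounds on $u$, $\hat u$ and then on $\theta$ via the Gronwall/comparison estimate exploiting $\gamma>0$ and the second part of Assumption \ref{A:LearningRateAssumption}, continuation to a global solution, and a semigroup-smoothing/parabolic-regularity bootstrap for the $W^{2,p}$ claims. The only cosmetic differences are that you differentiate $\norm{\theta(t)}_2$ rather than its square and you slightly overstate that the local existence time is data-independent (the Lipschitz constant involves $\norm{\hat u}$), but since you invoke the a priori bounds for continuation this does not affect the argument.
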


\begin{remark}
At this point we mention that even though in Assumption \ref{AssumptionLinear1} we have assumed that $\norm{f(\theta)}_{L^{2}(U)}$ is uniformly bounded, an investigation of  the proof of Theorem \ref{T:WellPosednessThm} shows that this assumption can be relaxed. In particular, at the expense of slightly more elaborate estimates, the results of this paper (which heavily rely on (\ref{UnitormThetaBound}) being true) hold if we assume instead
that $\norm{f(\theta)}_{L^{2}(U)}$ grows linearly in $\norm{\theta}_{\ell_{2}}$ with bounded derivatives, i.e.,
$\norm{f(\theta)}_{L^{2}}\leq C(1+\norm{\theta}_{\ell_{2}})$, still with $\left(\norm{\nabla_{\theta}f(\theta)}_{L^{2}}+\norm{\nabla^{2}_{\theta}f(\theta) }_{L^{2}(U)}\right)<C$ and additionally that the regularization coefficient $\gamma>0$ is large enough depending on the $L^{2}$ norms of $u_{0}(x)$ and $\hat{u}_{0}(x)$. We have chosen to present the results for uniformly bounded $\norm{f(\theta)}_{L^{2}(U)}$ for presentation purposes and because in this case we do not need any additional restriction on the magnitude of $\gamma$ other than being strictly positive.
\end{remark}

Let us conclude this section with the proofs of Lemma \ref{Eq:ObjFcnRep} and Theorem \ref{T:WellPosednessThm}.
\begin{proof}[Proof of Lemma \ref{Eq:ObjFcnRep}]
(\ref{GradientObj}) can be derived using the definition of the adjoint PDE (\ref{AdjointODE}). Define $\tilde u = \nabla_{\theta} u^{\ast}$. Differentiating (\ref{ForwardPDE}) yields
\begin{align}
A \tilde u(x) &= \nabla_{\theta} f( x,\theta ), \quad x\in U\nonumber\\
\tilde u(x)&= 0,\quad x\in \partial U\nonumber
\end{align}

Integration by parts yields
\begin{eqnarray*}
( \hat u^{\ast}, A \tilde u ) &=&  ( A^{\dagger} \hat u^{\ast}, \tilde u ).
\end{eqnarray*}

Due to (\ref{ForwardPDE}), this yields the equation
\begin{eqnarray*}
( A^{\dagger} \hat u^{\ast}, \tilde u ) &=& ( \hat u^{\ast}, \nabla_{\theta} f(\theta) ).
\end{eqnarray*}

Using the definition of the adjoint PDE (\ref{AdjointODE}),
\begin{eqnarray*}
(u^{\ast} - h, \tilde u) &=& ( \hat u^{\ast}, \nabla_{\theta} f(\theta) ).
\end{eqnarray*}

Recalling the objective function (\ref{ObjectiveODE}), we then write
\begin{eqnarray*}
\nabla_{\theta} J(\theta) &=& ( u^{\ast} - h, \tilde u ) + \gamma \theta \notag \\
&=& ( \hat u^{\ast}, \nabla_{\theta} f(\theta) ) + \gamma \theta,
\end{eqnarray*}
which yields (\ref{GradientObj}).
\end{proof}

\begin{proof}[Proof of Theorem \ref{T:WellPosednessThm}]
Let us define the index set $\mathcal{G}=\{1,2,3\}$ and the space $\Theta=U\times \mathcal{G}$. Define the variable $y=(x,\zeta)\in \Theta$ and the measure $dn=dx\otimes d\iota$ on $\Theta$ where $d\iota$ denotes the counting measure on $\mathcal{G}$. Define now the Banach space $X^{2}=L^{2}(\Theta,dn)$. Similarly we denote by $H^{2}(\Theta)=W^{2,2}(\Theta)$ the Banach space of functions $f$ on $\Theta$ such that for each $\zeta\in \mathcal{G}$ we have $f(\cdot,\zeta)\in H^{2}(U)$ with norm $\norm{f}_{H^{2}(\Theta)}=\sum_{\zeta=1}^{3}\norm{f(\cdot,\zeta)}_{H^{2}(U)}$.

Setting $v(t,x)=(u(t,x),\hat{u}(t,x),\theta(t))$ and $\rho(t,y)=v_{\zeta}(t,x)$ (the $\zeta'$th component of the vector $v$) for $y=(x,\zeta)\in U\times\mathcal{G}$ we consider the evolution equation on $\Theta$ given by
\begin{align}
\partial_{t}\rho(t,y)&=\mathcal{L}[\rho](t,y)+\mathcal{R}[\rho](t,y), y\in\Theta\label{Eq:EvolutionEquation}
\end{align}
where
\[
\mathcal{L}[\rho](t,x,1)=-Au(t,x), \quad \mathcal{L}[\rho](t,x,2)=-A^{\dagger}\hat{u}(t,x),\quad \mathcal{L}[\rho](t,x,3)=0
\]
 and
 \[\mathcal{R}[\rho](t,x,1)=f(x,\theta(t)),\quad \mathcal{R}[\rho](t,x,2)=u(t,x)-h(x), \quad \mathcal{R}[\rho](t,x,3)=- \alpha(t) \bigg{(} ( \nabla_{\theta} f ( \theta(t) ), \hat u(t) ) + \gamma \theta(t) \bigg{)}.
 \]

We note that we have slightly abused notation here because  $\mathcal{L}[\rho](t,x,3)=0$. However, this notation is convenient  because it allows us to describe the PDE in question in the form (\ref{Eq:EvolutionEquation}) as  a single vector valued evolution equation.

Let us define the norm $\norm{w}_{2,T}=\sup_{t\in[0,T]} \norm{w(t)}_{L^{2}(U)}$ if $w=w(t,x)$ and $\norm{w}_{2,T}=\sup_{t\in[0,T]} \norm{w(t)}_{\ell_{2}}$ if $w=w(t)$. Here $\ell_2$ denotes the standard Euclidean norm. In particular, for $v(t,x)=(u(t,x),\hat{u}(t,x),\theta(t))$ we shall have
\[
\norm{v}_{2,T}=\norm{u}_{2,T}+\norm{\hat{u}}_{2,T}+\norm{\theta}_{2,T}.
\]
 where, the first two components depend on $(t,x)$ and the last component depends only on $t$.

We will be working with mild solutions.  Due to Assumption \ref{A:Assumption0}, the operators $A$ and $A^{\dagger}$ are generators of  analytic contraction semigroups $\{S(t)\}_{t\geq 0}$ and $\{S^{\dagger}(t)\}_{t\geq 0}$ respectively on $L^{2}(U)$.

Then, we can write for the mild solution of (\ref{Eq:EvolutionEquation}) that
\begin{align}
\rho(t,y) &= H[\rho](t,y),\nonumber
\end{align}
where
\begin{align}
H[\rho](t,y)&=\begin{cases}
H[\rho](t,x,1) \\
H[\rho](t,x,2) \\
H[\rho](t,x,3)
\end{cases}=
\begin{cases}
S(t)u_{0}(x)+\int_{0}^{t}S(t-s)f(x,\theta(s))ds\\
S^{\dagger}(t)\hat{u}_{0}(x)+\int_{0}^{t}S^{\dagger}(t-s)(u(s,x)-h(x))ds\\
- \alpha(t) \bigg{(} ( \nabla_{\theta} f ( \theta(t) ), \hat u(t) ) + \gamma \theta(t) \bigg{)}.
\end{cases}\label{Eq:MildSolution}
\end{align}

Now the properties of the  analytic contraction semigroups $\{S(t)\}_{t\geq 0}$ and $\{S^{\dagger}(t)\}_{t\geq 0}$ guarantee that there exist an increasing continuous function $D(t)$ with $\lim_{t\rightarrow 0}D(t)=0$ (possible different from line to line below) such that
\begin{align}
\norm{H[\rho](\cdot,1)}_{2,T}&\leq \norm{u_{0}}_{L^{2}}+D(T)\nonumber\\
\norm{H[\rho](\cdot,2)}_{2,T}&\leq \norm{\hat{u}_{0}}_{L^{2}}+D(T)(1+\norm{u}_{2,T})\nonumber\\
\norm{H[\rho](\cdot,3)}_{2,T}&\leq \norm{\theta_{0}}_{\ell_{2}}+D(T)(\norm{\hat{u}}_{2,T}+\norm{\theta}_{2,T})\label{Eq:NormBounds}
\end{align}
 and for $\rho=(u_{1},\hat{u}_{1},\theta_{1})$ and $q=(u_{2},\hat{u}_{2},\theta_{2})$
\begin{align}
\norm{H[\rho](\cdot,1)-H[q](\cdot,1)}_{2,T}&\leq D(T) \norm{\theta_{1}-\theta_{2}}_{2,T}\nonumber\\
\norm{H[\rho](\cdot,2)-H[q](\cdot,2)}_{2,T}&\leq D(T)\norm{u_{1}-u_{2}}_{2,T}\nonumber\\
\norm{H[\rho](\cdot,3)-H[q](\cdot,3)}_{2,T}&\leq D(T)\left(\norm{\hat{u}_{1}-\hat{u}_{2}}_{2,T}+\norm{\theta_{1}-\theta_{2}}_{2,T}(1+\norm{\hat{u}_{1}}_{2,T})\right)\label{Eq:NormLipschitz}
\end{align}

The linear growth bounds of the operator $H$  as given by (\ref{Eq:NormBounds}) together with the local Lipschitz continuity property demonstrated in
(\ref{Eq:NormLipschitz}) allow us to conclude via  the classical Picard-Lindel\"{o}f theorem for Banach valued ODE's that for every $\rho_{0}\in X^{2}$ there exists a unique local mild solution $\rho\in\mathcal{C}([0, T_0], X^{2})$  of (\ref{Eq:EvolutionEquation}) for some sufficiently small $0<T_0<\infty$.

Next we want to show that this solution can be extended globally. To do so it is enough to establish a global bound for the $\norm{\cdot}_{X^{2}}$ of solutions. Indeed, the analytic contraction semigroups $\{S(t)\}_{t\geq 0}$ and $\{S^{\dagger}(t)\}_{t\geq 0}$ guarantee that there is a constant $K<\infty$ (independent of $t$) such that
\begin{eqnarray}
\norm{u(t)}_{L^{2}} &\leq& \norm{S(t) u(0)}_{L^{2}}  + \int_0^t \norm{S(t-s) f(\theta(s)) }_{L^{2}}  ds \notag \\
&\leq& \norm{u(0)}_{L^{2}} + C \int_0^t  e^{- \lambda (t -s )}  ds \notag \\
&\leq& K.\label{Eq:EstimateForu}
\end{eqnarray}

Analogously, for a potentially different constant $K<\infty$ and using estimate (\ref{Eq:EstimateForu})
\begin{eqnarray}
\norm{\hat{u}(t)}_{L^{2}} &\leq& \norm{^{\dagger}S(t) \hat{u}(0)}_{L^{2}}  + \int_0^t \norm{S^{\dagger}(t-s) (u(s)-h) }_{L^{2}}  ds \notag \\
&\leq& \norm{\hat{u}(0)}_{L^{2}} +  \int_0^t  e^{- \lambda (t -s )}(\norm{u(s)}_{L^{2}}+\norm{h}_{L^{2}})  ds \notag\\
&\leq& K. \label{Eq:EstimateForHatu}
\end{eqnarray}

Let us next show that $\theta(t)$ is uniformly bounded in time. Define the quantity $Q(t) =  \norm{ ( \nabla_{\theta} f ( \theta(t) ), \hat u(t) ) }^{2}_2$,  and notice that due to estimate (\ref{Eq:EstimateForHatu}) and the bound on $\nabla_{\theta} f(\theta)$ by Assumption \ref{AssumptionLinear1}, we obtain that $\sup_{t \geq 0} Q(t)<C$ for some appropriate constant $C<\infty$.

By direct calculation, keeping in mind the equation that $\theta(t)$ satisfies and H\"{older} inequality, we obtain
\begin{align}
\frac{d}{dt} \norm{\theta(t)}^{2}_2 &=-2\gamma \alpha(t) \norm{\theta(t)}^{2}_2 -2\alpha(t) \theta_{t} ( \nabla_{\theta} f ( \theta(t) ), \hat u(t) )\nonumber\\
&\leq -\gamma \alpha(t)\norm{\theta(t)}^{2}_2 +\frac{4}{\gamma}\alpha(t) Q(t). \nonumber
\end{align}

By comparison principle we then have that
\begin{align}
\norm{\theta(t)}^{2}_2&\leq e^{-\gamma\int_{0}^{t}\alpha(s)ds}\norm{\theta(0)}^{2}_2
+\frac{4}{\gamma} \int_{0}^{t}\alpha(s)e^{-\gamma\int_{s}^{t}\alpha(r)dr}Q(s)ds\nonumber\\
&\leq e^{-\gamma\int_{0}^{t}\alpha(s)ds}\norm{\theta(0)}^{2}_2
+C \int_{0}^{t}\alpha(s)e^{-\gamma\int_{s}^{t}\alpha(r)dr}ds\label{Eq:EstimateForTheta}
\end{align}
and the result follows by requiring that for some $C<\infty$ (see Assumption \ref{A:LearningRateAssumption})
\[
\sup_{t\geq 0} e^{-\gamma\int_{0}^{t}\alpha(s)ds} + \sup_{t\geq 0} \int_{0}^{t}\alpha(s)e^{-\gamma\int_{s}^{t}\alpha(r)dr}ds\leq C
\]

All in all we obtain the a-priori global estimate
\begin{align}
\norm{\rho}_{2,\infty}&\leq K\nonumber
\end{align}
for some finite constant $K<\infty$. With this a-priori bound the solution can be extended indefinitely in time. Thus a unique global solution exists. This means that there exists a unique global mild solution $\rho\in\mathcal{C}([0, \infty), X^{2})$.

Essentially the same argument as above shows that if the initial data are in $L^{q}$ and $h\in L^{q}$, for $q>2$, then we will have that there is a unique global mild solution $\rho\in\mathcal{C}([0, \infty), L^{q}(\Theta,dn))$.

Let us now discuss regularity. We will prove that for initial data and $h$ in $L^{q}$, we actually have that $u,\hat{u}\in\mathcal{C}([0, \infty), L^{p})$ for any $p\in [q,\infty)$.  We will use a bootstrap argument. Due to Sobolev embedding theorem and Riesz-Thorin theorem  the following $L^{q}\rightarrow L^{p}$ estimate for the semigroup $S(t)$ holds
\begin{align}
\norm{S(t)g}_{L^{p}(U)}&\leq C (t\wedge 1)^{-\frac{n}{2}(\frac{1}{q}-\frac{1}{p})}\norm{g}_{L^{q}(U)}\label{Eq:SemigroupEstimate}
\end{align}
where $n$ is the spatial dimension, $p\geq q$ and $g$ a test function. Then, let us consider $p>q$ such that $\frac{1}{n}=\frac{1}{q}-\frac{1}{p}$ and assume that we know $\norm{u}_{L^{q}}<\infty$. Consider an initial time $t=\epsilon$ for some fixed $\epsilon>0$ and using  (\ref{Eq:SemigroupEstimate}) we have for $u$ (we use the mild formulation of the solution )
\begin{align}
\norm{u(t+\epsilon)}_{L^{p}}&\leq C\left[ t^{-\frac{1}{2}}\norm{u(\epsilon)}_{L^{q}}+\int_{0}^{t}(t-s)^{-\frac{1}{2}}\norm{u(s+\epsilon)}_{L^{q}}\right]ds\nonumber\\
&\leq C\left[ t^{-\frac{1}{2}}\norm{u(\epsilon)}_{L^{q}}+t^{\frac{1}{2}}\sup_{s\in[\epsilon,\epsilon+t]}\norm{u(s)}_{L^{q}}\right]ds. \nonumber
\end{align}

Next consider the solution $u$ starting at time $t=2\epsilon$ with initial data $u(2\epsilon)\in L^{p}$. Then, we will have that $u\in\mathcal{C}([2\epsilon,\infty), L^{p})$. Notice now that $\epsilon>0$ is arbitrary. Thus we obtain that $u\in\mathcal{C}([0,\infty),L^{p})$ for $p\geq q$ such that $\frac{1}{n}=\frac{1}{q}-\frac{1}{p}$. Using this argument inductively, first with $q=2$ and $p>2$ such that  $\frac{1}{n}=\frac{1}{2}-\frac{1}{p}$ we then get that $u\in\mathcal{C}([0,\infty), L^{p})$ for any  $p\in[2,\infty)$.

Following exactly the same process and using that $u\in\mathcal{C}([0,\infty), L^{p})$ for any  $p\in[2,\infty)$ we then obtain that if $h\in L^{p}$ for all $p\geq 2$, then $\hat{u}\in\mathcal{C}([0,\infty), L^{p})$ for any  $p\in[2,\infty)$ as well.

Next, we notice that the forcing term $\mathcal{R}$ in (\ref{Eq:EvolutionEquation}) is in $L^{p}(U)$, so by the parabolic estimates in Section IV.3 of \cite{Ladyzenskaja}, we get that $u,\hat{u}\in \mathcal{C}((0,\infty); W^{2,p}_{0}(U))\cap \mathcal{C}^{1}((0,\infty);L^{p}(U))$ and if the initial data $u_{0},\hat{u}_{0}\in W^{2,p}(U)$, then $u,\hat{u}\in \mathcal{C}([0,\infty); W^{2,p}_{0}(U))\cap \mathcal{C}^{1}([0,\infty);L^{p}(U))$ for any $p\geq 2$. This concludes the proof of the theorem.

\end{proof}

\section{Convergence to a stationary point}\label{S:ConvergenceStationaryPoint}

The main result of this section is the convergence result for $\theta(t)$. It says that as $t\rightarrow\infty$, $\theta(t)$ converges to a stationary point of the cost function $J(\theta)$.
\begin{theorem} \label{ConvergenceTheorem}
Assume Assumptions \ref{A:Assumption0}, \ref{A:LearningRateAssumption}, \ref{AssumptionLinear1} and \ref{AssumptionLinear2}. Then, we have that
\begin{eqnarray}
\lim_{t \rightarrow \infty} \norm{\nabla J( \theta(t))}_2 = 0.\nonumber
\end{eqnarray}
\end{theorem}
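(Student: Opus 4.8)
The plan is to treat the $\theta$-dynamics in (\ref{TimeR}) as a perturbed gradient flow for $J$. Writing the exact gradient from Lemma \ref{Eq:ObjFcnRep} as $\nabla J(\theta(t)) = (\hat u^{\ast}(t),\nabla_\theta f(\theta(t))) + \gamma\theta(t)$, and recalling that the algorithm uses the time-relaxed adjoint $\hat u(t)$ in place of the exact one $\hat u^{\ast}(t)$, the parameter equation becomes
\begin{align}
\frac{d\theta}{dt}(t) = -\alpha(t)\nabla J(\theta(t)) - \alpha(t)\,r(t), \qquad r(t):=\big(\hat u(t)-\hat u^{\ast}(t),\,\nabla_\theta f(\theta(t))\big).\nonumber
\end{align}
Thus the algorithm is an exact gradient descent perturbed by the error $r(t)$ stemming from the adjoint mismatch. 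I would then differentiate the Lyapunov function $J(\theta(t))$ along the flow, obtaining
\begin{align}
\frac{d}{dt}J(\theta(t)) = -\alpha(t)\norm{\nabla J(\theta(t))}_2^2 - \alpha(t)\,\nabla J(\theta(t))\cdot r(t),\nonumber
\end{align}
so that everything hinges on controlling the mismatch $\hat u(t)-\hat u^{\ast}(t)$ between the relaxed and exact adjoints.

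The core of the argument -- the multi-scale analysis -- is to show this mismatch is of order $\alpha(t)$. First I would bound the forward error $v(t):=u(t)-u^{\ast}(t)$. Since $Au^{\ast}(t)=f(\cdot,\theta(t))$ by (\ref{ForwardPDE}), subtracting gives $\partial_t v = -A v - \partial_t u^{\ast}(t)$, where $\partial_t u^{\ast} = (\nabla_\theta u^{\ast})\cdot\dot\theta$ is of order $\alpha(t)$: indeed $\dot\theta=O(\alpha)$ by (\ref{TimeR}) and the a priori bound (\ref{UnitormThetaBound}), while $A\nabla_\theta u^{\ast}=\nabla_\theta f$ forces $\norm{\nabla_\theta u^{\ast}}_{H}\leq C$ via coercivity (\ref{Eq:CoerciveA}) and Assumption \ref{AssumptionLinear1}. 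Using the exponential stability (\ref{Eq: ExponentialStability}) and the mild formulation,
\begin{align}
\norm{v(t)}_{H} \leq e^{-\lambda t}\norm{v(0)}_{H} + \int_0^t e^{-\lambda(t-s)}\norm{\partial_s u^{\ast}(s)}_{H}\,ds \leq C\,\alpha(t),\nonumber
\end{align}
where the final bound is precisely where the slow-variation condition $\alpha'(t)/\alpha(t)\to 0$ of Assumption \ref{A:LearningRateAssumption} enters, yielding $\int_0^t e^{-\lambda(t-s)}\alpha(s)\,ds \sim \alpha(t)/\lambda$. Repeating the computation for $\hat v(t):=\hat u(t)-\hat u^{\ast}(t)$, whose equation is $\partial_t\hat v = -A^{\dagger}\hat v + v(t) - \partial_t\hat u^{\ast}(t)$, and invoking the adjoint exponential stability (\ref{Eq:ExponentiaStableAdjoint}) together with $\norm{v(t)}_H=O(\alpha(t))$ and $\partial_t\hat u^{\ast}=O(\alpha(t))$ (from $A^{\dagger}\partial_t\hat u^{\ast}=\partial_t u^{\ast}$ and coercivity), I would conclude $\norm{\hat v(t)}_H \leq C\alpha(t)$, hence $\norm{r(t)}_2\leq C\alpha(t)$.

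With this in hand I would integrate the Lyapunov identity. Since $\nabla J$ and $\nabla_\theta f$ are bounded (Assumptions \ref{AssumptionLinear1}, \ref{AssumptionLinear2} and (\ref{UnitormThetaBound})), the error contribution is $\int_0^T \alpha(t)|\nabla J\cdot r|\,dt \leq C\int_0^T\alpha^2(t)\,dt$, finite by Assumption \ref{A:LearningRateAssumption}. As $J\geq 0$ is bounded below, rearranging gives
\begin{align}
\int_0^\infty \alpha(t)\norm{\nabla J(\theta(t))}_2^2\,dt < \infty,\nonumber
\end{align}
which combined with $\int_0^\infty\alpha(t)\,dt=\infty$ already forces $\liminf_{t\to\infty}\norm{\nabla J(\theta(t))}_2=0$. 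To upgrade to a genuine limit I would show $t\mapsto \norm{\nabla J(\theta(t))}_2^2$ is slowly varying: differentiating and using that $\nabla^2 J$ is bounded on the compact set $\{\norm{\theta}_2\leq K\}$ containing the trajectory gives $|\tfrac{d}{dt}\norm{\nabla J(\theta(t))}_2^2|\leq C\alpha(t)\to 0$. A standard contradiction argument then closes the proof: were $\norm{\nabla J}_2^2$ to exceed some $\epsilon$ along a sequence while $\liminf=0$, the slow variation would force each upward excursion above $\epsilon/2$ to have $\alpha$-length bounded below by a fixed constant, making $\int\alpha\norm{\nabla J}_2^2\,dt$ diverge.

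The main obstacle is the multi-scale estimate of the second paragraph, and in particular avoiding circularity: the forcing $\partial_t u^{\ast}$ driving the forward error depends on $\dot\theta$, which in turn depends on $\hat u$ and hence on the very adjoint error being estimated. This is resolved because $\dot\theta=O(\alpha)$ holds unconditionally from Theorem \ref{T:WellPosednessThm} (the drift is uniformly bounded and multiplied by $\alpha(t)$), so the forward and adjoint errors can be bounded sequentially rather than simultaneously. The delicate quantitative point is establishing the sharp asymptotics $\int_0^t e^{-\lambda(t-s)}\alpha(s)\,ds = O(\alpha(t))$, for which the condition $\alpha'(t)/\alpha(t)\to 0$ is indispensable.
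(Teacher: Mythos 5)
Your proposal is correct, and its first half coincides with the paper's own proof: you use the same perturbed-gradient-flow decomposition as (\ref{Eq:DynamicsTheta}), and the same sequential multi-scale estimates — $\dot\theta=O(\alpha)$ unconditionally from Theorem \ref{T:WellPosednessThm}, hence $\norm{\partial_t u^{\ast}}_H\leq C\alpha(t)$ and $\norm{\partial_t \hat u^{\ast}}_H\leq C\alpha(t)$ (this is Lemma \ref{L:DerivativeBound1}), then the mild-formulation bounds on $\phi=u-u^{\ast}$ and $\Psi=\hat u-\hat u^{\ast}$ with $\int_0^t e^{-\lambda(t-s)}\alpha(s)\,ds\sim\alpha(t)/\lambda$ (Lemmas \ref{PhiLemma} and \ref{L:ConvergenceRatePsi0}); your resolution of the apparent circularity is exactly the paper's ordering of these lemmas. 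Where you genuinely diverge is the endgame. The paper follows \cite{SirignanoSpiliopoulos2017}: it sets up cycles of stopping times $\tau_k,\sigma_k$ tracking excursions of $\norm{\nabla J(\theta(t))}_2$ above $\epsilon$, and proves (Lemmas \ref{L:DeltaDecay}--\ref{L:DecayJ2}) that each excursion decreases $J$ by at least $\zeta_1=\tfrac{\mu}{8}\epsilon^2$ while the intervals in between increase it by at most $\zeta_2<\zeta_1$, so infinitely many excursions would drive $J\to-\infty$, contradicting $J\geq 0$. You instead integrate the Lyapunov identity to get $\int_0^\infty\alpha(t)\norm{\nabla J(\theta(t))}_2^2\,dt<\infty$ (using $J\geq 0$ and the $O(\alpha^2)$ integrability of the perturbation), deduce $\liminf=0$, and upgrade to a full limit via the slow-variation bound $|\tfrac{d}{dt}\norm{\nabla J(\theta(t))}_2^2|\leq C\alpha(t)$, so that each excursion of $\norm{\nabla J}_2^2$ from $\epsilon/2$ to $\epsilon$ consumes a fixed positive amount of a finite integral. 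Both routes use the same ingredients ($J$ bounded below, smoothness of $J$ along the bounded trajectory, the $O(\alpha)$ adjoint mismatch); yours is shorter and in the classical Robbins--Monro style, and it yields the quantitative byproduct $\int_0^\infty\alpha\norm{\nabla J}_2^2\,dt<\infty$, whereas the paper's stopping-time bookkeeping avoids differentiating $\norm{\nabla J(\theta(\cdot))}_2^2$ (it needs only the Lipschitz constant $L_{\nabla J}$) and is structured so it carries over to settings where an exact pathwise Lyapunov identity is unavailable. Two points worth tightening in a write-up: the bound $\norm{\phi(t)}_H\leq C\alpha(t)$ absorbs the transient $e^{-\lambda t}\norm{\phi(0)}_H$, which is valid only for $t$ large (it does follow, since $\alpha'/\alpha\to 0$ forces $e^{-\lambda t}=o(\alpha(t))$, or one can keep the transient term separate as in (\ref{Eq:DecayRatePhi})); and your use of $\nabla^2 J$ bounded on the compact set containing the trajectory should be flagged as coming from $J\in C^2$ together with (\ref{UnitormThetaBound}), which is consistent with Assumption \ref{AssumptionLinear2} as the paper uses it.
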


The proof of Theorem \ref{ConvergenceTheorem} is be a consequence of series of lemmas. In Subsection \ref{SS:DecayRates} we establish necessary decay rates for the solution to (\ref{TimeRIntro}). These results are then used in Subsection \ref{SS:ProofConv} to characterize the behavior of $\theta(t)$ for large times and eventually prove Theorem \ref{ConvergenceTheorem}.

\subsection{ Decay rates for the online adjoint algorithm (\ref{TimeRIntro})}\label{SS:DecayRates}

In this subsection, we establish some necessary decay rates for the online adjoint algorithm (\ref{TimeRIntro}).

First, Lemma \ref{L:DerivativeBound1} has a critical bound on $ \norm{\frac{\partial u^{\ast}}{\partial t } }_{H}$ and on $ \norm{\frac{\partial \hat{u}^{\ast}}{\partial t } }_{H}$.
\begin{lemma}\label{L:DerivativeBound1}
Under Assumptions \ref{A:Assumption0} and \ref{AssumptionLinear1}, there exists a constant $C<\infty$ such that
\[
 \norm{\frac{\partial u^{\ast}}{\partial t } }_{H} + \norm{\frac{\partial \hat{u}^{\ast}}{\partial t } }_{H}< C \alpha(t)
 \]
 and consequently $\displaystyle \lim_{t \rightarrow \infty} \norm{ \frac{\partial u^{\ast}}{\partial t } }_{H}= \displaystyle \lim_{t \rightarrow \infty} \norm{ \frac{\partial \hat{u}^{\ast}}{\partial t } }_{H}=0$.
\end{lemma}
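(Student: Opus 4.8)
The plan is to exploit the fact that $u^{\ast}(t,x)=u^{\ast}(x;\theta(t))$ and $\hat u^{\ast}(t,x)=\hat u^{\ast}(x;\theta(t))$ depend on $t$ \emph{only} through the parameter $\theta(t)$, so that by the chain rule
\[
\frac{\partial u^{\ast}}{\partial t}=\sum_{i=1}^{d}\frac{\partial u^{\ast}}{\partial \theta_{i}}\,\frac{d\theta_{i}}{dt},\qquad \frac{\partial \hat u^{\ast}}{\partial t}=\sum_{i=1}^{d}\frac{\partial \hat u^{\ast}}{\partial \theta_{i}}\,\frac{d\theta_{i}}{dt}.
\]
The factor $\tfrac{d\theta}{dt}$ is the source of the $\alpha(t)$: from the third line of (\ref{TimeR}), the bound $\norm{\nabla_{\theta}f(\theta)}_{L^{2}}\le C$ of Assumption \ref{AssumptionLinear1}, and the uniform a-priori estimates (\ref{UnitormThetaBound}) on $\norm{\hat u(t)}_{L^{2}}$ and $\norm{\theta(t)}_{2}$ from Theorem \ref{T:WellPosednessThm}, I obtain at once $\norm{\tfrac{d\theta}{dt}}_{2}\le \alpha(t)\big(\norm{\nabla_{\theta}f(\theta(t))}_{L^{2}}\norm{\hat u(t)}_{L^{2}}+\gamma\norm{\theta(t)}_{2}\big)\le C\alpha(t)$. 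It therefore remains to bound the parameter sensitivities $\partial_{\theta_{i}}u^{\ast}$ and $\partial_{\theta_{i}}\hat u^{\ast}$ in $H$ uniformly in $\theta$.

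For the forward sensitivities I would differentiate the elliptic PDE (\ref{ForwardPDE}) in $\theta_{i}$, exactly as is already done in the proof of Lemma \ref{Eq:ObjFcnRep}, where $\tilde u=\nabla_{\theta}u^{\ast}$ solves $A\tilde u=\nabla_{\theta}f(\cdot,\theta)$ with zero boundary data. Testing this against $\partial_{\theta_{i}}u^{\ast}$ and invoking coercivity (\ref{Eq:CoerciveA}) gives $\lambda\norm{\partial_{\theta_{i}}u^{\ast}}_{H}^{2}\le (A\,\partial_{\theta_{i}}u^{\ast},\partial_{\theta_{i}}u^{\ast})=(\partial_{\theta_{i}}f(\theta),\partial_{\theta_{i}}u^{\ast})$, whence $\norm{\partial_{\theta_{i}}u^{\ast}}_{H}\le \tfrac{1}{\lambda}\norm{\partial_{\theta_{i}}f(\theta)}_{H}$. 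Differentiating the adjoint PDE (\ref{AdjointODE}) in $\theta_{i}$ yields $A^{\dagger}\partial_{\theta_{i}}\hat u^{\ast}=\partial_{\theta_{i}}u^{\ast}$ (the target $h$ is $\theta$-independent), and testing against $\partial_{\theta_{i}}\hat u^{\ast}$ using coercivity of the adjoint (\ref{Eq:CoercivityAdjoint}) gives $\norm{\partial_{\theta_{i}}\hat u^{\ast}}_{H}\le \tfrac{1}{\lambda}\norm{\partial_{\theta_{i}}u^{\ast}}_{H}\le \tfrac{1}{\lambda^{2}}\norm{\partial_{\theta_{i}}f(\theta)}_{H}$.

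Combining these through the chain rule and a Cauchy--Schwarz in the index $i$, I get
\[
\norm{\frac{\partial u^{\ast}}{\partial t}}_{H}\le \frac{1}{\lambda}\Big(\sum_{i}\norm{\partial_{\theta_{i}}f(\theta)}_{H}^{2}\Big)^{1/2}\norm{\frac{d\theta}{dt}}_{2}\le \frac{C}{\lambda}\norm{\frac{d\theta}{dt}}_{2}\le C\alpha(t),
\]
and analogously $\norm{\partial_{t}\hat u^{\ast}}_{H}\le \tfrac{C}{\lambda^{2}}\norm{\tfrac{d\theta}{dt}}_{2}\le C\alpha(t)$, using the uniform bound $\norm{\nabla_{\theta}f(\theta)}_{L^{2}}\le C$ of Assumption \ref{AssumptionLinear1}. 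This gives the claimed inequality, and since $\alpha(t)\to 0$ as $t\to\infty$ by Assumption \ref{A:LearningRateAssumption}, the two limits follow immediately.

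The only genuinely delicate point, and the step I would treat most carefully, is justifying that $\theta\mapsto u^{\ast}(\cdot;\theta)$ and $\theta\mapsto\hat u^{\ast}(\cdot;\theta)$ are truly $H$-valued differentiable, so that the formal differentiation of (\ref{ForwardPDE}) and (\ref{AdjointODE}) and the chain rule above are legitimate rather than merely formal. This follows because the solution map $f\mapsto A^{-1}f$ is bounded linear on $H$ (with operator norm $\le 1/\lambda$, by the same coercivity argument that gives $\norm{A^{-1}f}_{H}\le\tfrac1\lambda\norm{f}_{H}$), while $\theta\mapsto f(\cdot,\theta)$ is $C^{2}$ into $L^{2}(U)$ with uniformly bounded first and second derivatives by Assumption \ref{AssumptionLinear1}; composing these yields Fréchet differentiability of the solutions in $\theta$ together with the difference-quotient estimates needed to upgrade the formal identities to rigorous ones. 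Everything else reduces to the coercivity estimates and the a-priori bound (\ref{UnitormThetaBound}).
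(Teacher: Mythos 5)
Your proof is correct and follows essentially the same route as the paper: the paper differentiates the two elliptic systems directly in $t$ (obtaining $A\,\partial_{t}u^{\ast}=\nabla_{\theta}f(\cdot,\theta(t))^{\top}\frac{d\theta}{dt}$ and $A^{\dagger}\,\partial_{t}\hat{u}^{\ast}=\partial_{t}u^{\ast}$) and then applies exactly the coercivity estimates (\ref{Eq:CoerciveA}), (\ref{Eq:CoercivityAdjoint}) together with Assumption \ref{AssumptionLinear1} and the uniform bound (\ref{UnitormThetaBound}), so your factorization through the $\theta$-sensitivities followed by the chain rule is the same computation performed in a different order, with the same constants. Your closing discussion of the $H$-valued Fr\'echet differentiability of $\theta\mapsto u^{\ast}(\cdot;\theta)$ makes explicit a step the paper carries out only formally, which is a sound (if optional) addition.
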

\begin{proof}
 First, we study $\norm{ \frac{\partial u^{\ast}}{\partial t } }_{H}$. We see that $\frac{\partial u^{\ast}}{\partial t}(t,x)$ satisfies the PDE
\begin{align}
A\frac{\partial u^{\ast}}{\partial t }(t,x) &=  \nabla_{\theta} f(x, \theta(t) )^{\top} \frac{d \theta}{dt} = -\alpha(t) \nabla_{\theta} f(x, \theta(t) )^{\top} \bigg{(} \left( \nabla_{\theta} f (\theta(t) ), \hat u(t) \right) + \gamma \theta(t) \bigg{)},\quad x\in U, t>0 \nonumber\\
\frac{\partial u^{\ast}}{\partial t }&=0, \quad x\in\partial U, t>0
\label{UastPDE}
\end{align}

By the coercivity assumption on $A$ by Assumption \ref{A:Assumption0} we subsequently obtain
\begin{align}
\norm{ \frac{\partial u^{\ast}}{\partial t }}^{2}_{H} &\leq \frac{1}{\lambda} \left( A \frac{\partial u^{\ast}}{\partial t }, \frac{\partial u^{\ast}}{\partial t } \right) \nonumber\\
&\leq \frac{1}{\lambda} \alpha(t) \left|\left( \nabla_{\theta} f( \theta(t) )^{\top} \bigg{(} \left( \nabla_{\theta} f (\theta(t) ), \hat u(t) \right) + \gamma \theta(t) \bigg{)}, \frac{\partial u^{\ast}}{\partial t } \right) \right|\nonumber\\
&\leq \frac{1}{\lambda} \alpha(t) \norm{ \nabla_{\theta} f( \theta(t) )^{\top} \bigg{(} \left( \nabla_{\theta} f (\theta(t) ), \hat u(t) \right) + \gamma \theta(t) \bigg{)}}_{H}\norm{ \frac{\partial u^{\ast}}{\partial t } }_{H}\nonumber
\end{align}
and the result follows directly by Assumption \ref{AssumptionLinear1} on $\nabla_{\theta}f$ and estimate (\ref{UnitormThetaBound}).

Let us now turn our attention to $\norm{ \frac{\partial \hat{u}^{\ast}}{\partial t } }_{H}$. By differentiation, we obtain that $\frac{\partial u^{\ast}}{\partial t }(t,x) $ satisfies the PDE
\begin{align}
A^{\dagger}\frac{\partial \hat{u}^{\ast}}{\partial t }(t,x) &=  \frac{\partial u^{\ast}}{\partial t }(t,x),\quad x\in U, t>0 \nonumber\\
\frac{\partial \hat{u}^{\ast}}{\partial t }&=0, \quad x\in\partial U, t>0
\label{UastPDE2}
\end{align}

The result then follows by the coercivity condition on $A^{\dagger}$ by Assumption \ref{A:Assumption0} and due to the fact that $\norm{\frac{\partial u^{\ast}}{\partial t } }_{H}\leq C \alpha(t)$. This completes the proof of the lemma.
\end{proof}

Let us consider the difference
\begin{align}
\phi(t,x) = u(t,x) - u^{\ast}(t,x).\label{Eq:Phi}
\end{align}

$\phi(t)$ satisfies the PDE
\begin{align}
\frac{\partial \phi}{\partial t}(t,x) &= - A \phi(t,x) - \frac{\partial u^{\ast}}{\partial t}(t,x),\quad x\in U, t>0\nonumber\\
\phi(t,x)&=0, \quad x\in\partial U, t>0\nonumber\\
\phi(0,x)&= u_{0}(x)-u^{\ast}(0,x), \quad x\in U
\label{PhiPDE}
\end{align}

\begin{lemma} \label{PhiLemma}
Under Assumptions \ref{A:Assumption0}, \ref{A:LearningRateAssumption} and \ref{AssumptionLinear1} we have that
\begin{align}
\lim_{t \rightarrow \infty} \norm{ \phi(t) }_{H} &= 0,
\label{PhiLimit}
\end{align}
and there is some finite $T^{*}<\infty$ such that for all $t\geq T^{*}$
\begin{align}\label{Eq:DecayRatePhi}
\norm{ \phi(t) }_{H}&\leq C\left(e^{-\lambda t}+ \alpha(t)\right),
\end{align}
where $C<\infty$ is an unimportant constant.
\end{lemma}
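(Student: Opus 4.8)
The plan is to treat (\ref{PhiPDE}) as a linear inhomogeneous parabolic equation driven by the forcing term $-\partial_t u^{\ast}$ and to represent its solution through the variation-of-constants (mild solution) formula, then exploit the exponential stability of the semigroup $S(t)$ generated by $-A$. Writing the mild solution of (\ref{PhiPDE}) as
\[
\phi(t) = S(t)\phi(0) - \int_0^t S(t-s)\frac{\partial u^{\ast}}{\partial s}(s)\,ds,
\]
and combining the contraction bound (\ref{Eq: ExponentialStability}), namely $\norm{S(t)g}_H \leq e^{-\lambda t}\norm{g}_H$, with the estimate $\norm{\partial_t u^{\ast}}_H \leq C\alpha(t)$ from Lemma \ref{L:DerivativeBound1}, I would obtain immediately
\[
\norm{\phi(t)}_H \leq e^{-\lambda t}\norm{\phi(0)}_H + C\int_0^t e^{-\lambda(t-s)}\alpha(s)\,ds.
\]
Since $\phi(0)=u_0-u^{\ast}(0)\in H$, the first term is already of the desired form $Ce^{-\lambda t}$, so the whole problem reduces to controlling the convolution integral $I(t):=\int_0^t e^{-\lambda(t-s)}\alpha(s)\,ds$.

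The next step is to show $I(t)\leq C\bigl(e^{-\lambda t}+\alpha(t)\bigr)$ by splitting at a fixed threshold $T^{\ast}$, i.e. $I(t)=\int_0^{T^{\ast}}+\int_{T^{\ast}}^t$. On $[0,T^{\ast}]$ one has $e^{-\lambda(t-s)}\leq e^{-\lambda(t-T^{\ast})}$ while $\alpha$ is bounded, so that head contributes at most $Ce^{-\lambda t}$. The delicate piece is the tail $\int_{T^{\ast}}^t e^{-\lambda(t-s)}\alpha(s)\,ds$, which must be shown to be $O(\alpha(t))$ rather than merely $o(1)$.

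This tail estimate is the main obstacle, and it is exactly where the second part of Assumption \ref{A:LearningRateAssumption} enters: a crude bound $\alpha(s)\le\sup\alpha$ only yields an $O(t\,e^{-\lambda t})$ contribution and misses the sharp rate. The key observation is that $\lim_{t\to\infty}\alpha'(t)/\alpha(t)=0$ makes $\alpha$ slowly varying. Fixing any $\epsilon\in(0,\lambda)$, I would choose $T^{\ast}$ large enough that $|\alpha'(s)/\alpha(s)|\leq\epsilon$ for $s\geq T^{\ast}$; integrating $\frac{d}{ds}\log\alpha(s)\geq-\epsilon$ then gives $\alpha(s)\leq\alpha(t)e^{\epsilon(t-s)}$ for $T^{\ast}\leq s\leq t$, so that
\[
\int_{T^{\ast}}^t e^{-\lambda(t-s)}\alpha(s)\,ds \leq \alpha(t)\int_{T^{\ast}}^t e^{-(\lambda-\epsilon)(t-s)}\,ds \leq \frac{\alpha(t)}{\lambda-\epsilon}.
\]
Adding the two pieces produces the claimed decay bound (\ref{Eq:DecayRatePhi}) for all $t\geq T^{\ast}$.

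Finally, (\ref{PhiLimit}) is an immediate corollary: since $e^{-\lambda t}\to0$ and, by Assumption \ref{A:LearningRateAssumption}, $\alpha(t)\to0$ as $t\to\infty$, the right-hand side of (\ref{Eq:DecayRatePhi}) vanishes in the limit, giving $\norm{\phi(t)}_H\to0$. (Should one want only the limit and not the rate, it can alternatively be read off directly from the convolution representation by a dominated-convergence argument.)
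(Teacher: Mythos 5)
Your proposal is correct and follows essentially the same route as the paper: both write $\phi$ via the variation-of-constants formula, apply the contraction bound $\norm{S(t)g}_{H}\leq e^{-\lambda t}\norm{g}_{H}$ together with Lemma \ref{L:DerivativeBound1}, and reduce everything to showing that $I(t)=\int_0^t e^{-\lambda(t-s)}\alpha(s)\,ds$ is $O\left(e^{-\lambda t}+\alpha(t)\right)$. Two fine points of comparison: where the paper merely asserts that $\lim_{t\rightarrow\infty} I(t)/\alpha(t)=1/\lambda$ ("it is easy to show"), you actually prove the tail estimate, using $\alpha'(t)/\alpha(t)\rightarrow 0$ from Assumption \ref{A:LearningRateAssumption} to get $\alpha(s)\leq\alpha(t)e^{\epsilon(t-s)}$, which is a more complete version of the same step; conversely, you invoke the mild-solution representation of (\ref{PhiPDE}) without justification, whereas the paper first proves that $t\mapsto\frac{\partial u^{\ast}}{\partial t}$ is globally Lipschitz in time (via a uniform bound on $\norm{\frac{\partial^2 u^{\ast}}{\partial t^2}}_{H}$) precisely so that the forcing term is regular enough for that representation to be valid, so a sentence establishing this temporal regularity should be added to your argument.
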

\begin{proof}
We begin by proving that $\frac{\partial u^{\ast}}{\partial t}(t)$ is globally Lipschitz in time. Differentiating the elliptic PDE that $u^{\ast}$ satisfies twice with respect to $t$ yields for  $x\in U$ and $t\geq 0$
\begin{eqnarray}
A\frac{\partial^2 u^{\ast}}{\partial t^2 } &=& -\alpha'(t) \nabla_{\theta} f( \theta(t) )^{\top} \bigg{(} \left( \nabla_{\theta} f ( \theta(t) ), \hat u(t) \right) + \gamma \theta(t) \bigg{)} \notag \\
&-& \alpha(t)  \frac{\partial}{\partial t} \bigg{[} \nabla_{\theta} f( \theta(t) )^{\top} \bigg{(} \left( \nabla_{\theta} f ( \theta(t) ), \hat u(t) \right) + \gamma \theta(t) \bigg{)} \bigg{]}.\nonumber
\end{eqnarray}
Therefore, using the bounds on $f(\theta), \nabla f(\theta), \theta(t), u(t),$ and $\hat u(t)$, we can show, using the coercivity Assumption \ref{A:Assumption0} and the Cauchy-Schwarz inequality as in Lemma \ref{L:DerivativeBound1},  that $\displaystyle \sup_{t<\infty} \norm{\frac{\partial^2 u^{\ast}}{\partial t^2 }}_{H}\leq C$.    We provide the necessary calculations below for completeness.

\begin{eqnarray}
\norm{ \frac{\partial^2 u^{\ast}}{\partial t^2 } }^2_H &\leq& \frac{1}{\lambda} ( A\frac{\partial^2 u^{\ast}}{\partial t^2 } , \frac{\partial^2 u^{\ast}}{\partial t^2 }  ) \leq \frac{1}{\lambda}  \norm{  A\frac{\partial^2 u^{\ast}}{\partial t^2 } }_H \norm{ \frac{\partial^2 u^{\ast}}{\partial t^2 } }_H \notag \\
&\leq& \frac{K}{\lambda}  \norm{ \frac{\partial^2 u^{\ast}}{\partial t^2 } }_H, \nonumber
\end{eqnarray}
where $K$ is a constant. Re-arranging yields, for $t \geq 0$,
\begin{eqnarray}
\norm{ \frac{\partial^2 u^{\ast}}{\partial t^2 } }_H \leq C, \nonumber
\end{eqnarray}
where $C$ is a constant. This, then gives
\begin{align}
\norm{ \frac{\partial u^{\ast}}{\partial t }(t) - \frac{\partial u^{\ast}}{\partial t }(s) }_{H} &= \norm{ \int_s^t \frac{\partial^2 u^{\ast}(\rho)}{\partial \rho^2 } d\rho  }_{H} \leq \int_s^t \norm{ \frac{\partial^2 u^{\ast}(\rho)}{\partial \rho^2 } }_H d\rho  \leq C |t - s |.
\end{align}

Therefore, we can write
\begin{eqnarray}
\phi(t) = S(t) \phi(0) - \int_0^t S(t-\tau) \frac{\partial u^{\ast}}{\partial \tau} d \tau, \nonumber
\end{eqnarray}
where $\norm{S(t)}_{H} \leq  e^{- \lambda t }$ with $\lambda > 0$ by Assumptions \ref{A:Assumption0}.

Due to Lemma \ref{L:DerivativeBound1}, for any $\epsilon > 0$, there exists a $s$ such that $\norm{ \frac{\partial u^{\ast}}{\partial \tau} }_H < \epsilon$ for $t > s$. By the triangle inequality,
\begin{eqnarray}
\norm{\phi(t) }_H &\leq&  \norm{S(t)\phi(0)}_H  + \int_0^t \norm{ S(t-\tau) \frac{\partial u^{\ast}}{\partial \tau}}_H  d \tau \notag \\
&\leq& e^{- \lambda t } \norm{ \phi(0) }_H  + C_2 \int_0^t e^{- \lambda (t - \tau) } \alpha(\tau) d \tau. \notag
\end{eqnarray}

Let us now define $I(t)=\int_0^t e^{- \lambda (t - \tau) } \alpha(\tau) d \tau$. Next, it is easy to show that $\lim_{t\rightarrow\infty}I(t)=0$ and in particular that the integral term goes to zero at the rate of $\alpha(t)$ in the sense that $\lim_{t\rightarrow\infty}\frac{I(t)}{\alpha(t)}=\frac{1}{\lambda}$. These observations imply that there is a finite $T^{*}<\infty$ such that for all $t\geq T^{*}$, we have that $I(t)\leq C \alpha(t)$ for some constant $C<\infty$.

Hence we indeed get that both (\ref{PhiLimit}) and (\ref{Eq:DecayRatePhi}) hold, concluding the proof of the lemma.
\end{proof}

Define $\Psi(t,x) = \hat u (t,x) - \hat u^{\ast}(t,x)$. A similar lemma can also be proven for the limit of $\Psi(t)$.
\begin{lemma}\label{L:ConvergenceRatePsi0}
Under Assumptions \ref{A:Assumption0}, \ref{A:LearningRateAssumption} and \ref{AssumptionLinear1} we have that
\begin{eqnarray}
\lim_{t \rightarrow \infty} \norm{ \Psi(t) }_{H} = 0,
\label{PsiLimit}
\end{eqnarray}
and there is some finite $T^{*}<\infty$ such that for all $t\geq T^{*}$
\begin{align}\label{Eq:DecayRatePsi}
\norm{ \Psi(t) }_{H}&\leq C\left(e^{-\lambda t}t+ \alpha(t)\right),
\end{align}
where $C<\infty$ is an unimportant constant.
\end{lemma}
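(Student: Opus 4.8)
The plan is to derive for $\Psi$ an evolution equation of exactly the same type as the one satisfied by $\phi$ in Lemma \ref{PhiLemma}, and then to combine the exponential stability of the adjoint semigroup $S^{\dagger}$ with the decay estimates already established in Lemmas \ref{L:DerivativeBound1} and \ref{PhiLemma}. First I would compute the PDE for $\Psi = \hat u - \hat u^{\ast}$. Since $\hat u$ satisfies $\partial_t \hat u = -A^{\dagger}\hat u + (u - h)$ while $\hat u^{\ast}$ satisfies the elliptic identity $A^{\dagger}\hat u^{\ast} = u^{\ast} - h$, writing $u - h = A^{\dagger}\hat u^{\ast} + \phi$ with $\phi = u - u^{\ast}$ as in (\ref{Eq:Phi}) gives
\begin{align}
\frac{\partial \Psi}{\partial t}(t,x) &= -A^{\dagger}\Psi(t,x) + \phi(t,x) - \frac{\partial \hat u^{\ast}}{\partial t}(t,x), \quad x\in U,\ t>0,\nonumber
\end{align}
with zero boundary data and initial condition $\Psi(0,x) = \hat u_{0}(x) - \hat u^{\ast}(0,x)$. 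This is structurally identical to (\ref{PhiPDE}), the only difference being that the source now carries the extra term $\phi$ in addition to $-\partial_t \hat u^{\ast}$.

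Next I would pass to the mild formulation via Duhamel's principle,
\begin{align}
\Psi(t) &= S^{\dagger}(t)\Psi(0) + \int_0^t S^{\dagger}(t-\tau)\left( \phi(\tau) - \frac{\partial \hat u^{\ast}}{\partial \tau}(\tau) \right) d\tau,\nonumber
\end{align}
and estimate term by term using $\norm{S^{\dagger}(t)w}_{H} \leq e^{-\lambda t}\norm{w}_{H}$ from (\ref{Eq:ExponentiaStableAdjoint}). The piece involving $\partial_\tau \hat u^{\ast}$ is treated exactly as in Lemma \ref{PhiLemma}: by Lemma \ref{L:DerivativeBound1} it is bounded by $C\alpha(\tau)$, so $\int_0^t e^{-\lambda(t-\tau)}\norm{\partial_\tau \hat u^{\ast}}_{H}\,d\tau \leq C\,I(t) \leq C\alpha(t)$ for $t\geq T^{*}$, reusing the control on $I(t)=\int_0^t e^{-\lambda(t-\tau)}\alpha(\tau)\,d\tau$ from the proof of that lemma. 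The term $S^{\dagger}(t)\Psi(0)$ contributes only $e^{-\lambda t}\norm{\Psi(0)}_{H}$.

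The genuinely new ingredient, and the step I expect to require the most care, is the contribution of the source $\phi$. Splitting the convolution at $T^{*}$, on $[0,T^{*}]$ I would use the uniform bound on $\norm{\phi}_{H}$ (which follows from (\ref{UnitormThetaBound}) and the definitions of $u$ and $u^{\ast}$) to get $\int_0^{T^{*}} e^{-\lambda(t-\tau)}\norm{\phi(\tau)}_{H}\,d\tau \leq C e^{-\lambda t}$. On $[T^{*},t]$ I would insert the rate (\ref{Eq:DecayRatePhi}), namely $\norm{\phi(\tau)}_{H}\leq C(e^{-\lambda\tau}+\alpha(\tau))$. The $\alpha$ part again gives $C\,I(t)\leq C\alpha(t)$, while the exponential part produces the resonant convolution
\begin{align}
\int_{T^{*}}^{t} e^{-\lambda(t-\tau)}e^{-\lambda\tau}\,d\tau &= e^{-\lambda t}\,(t-T^{*}) \leq t\,e^{-\lambda t},\nonumber
\end{align}
where two exponentials of the \emph{same} decay rate $\lambda$ convolve to produce the extra algebraic factor $t$. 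This resonance is precisely the source of the $e^{-\lambda t}t$ term in (\ref{Eq:DecayRatePsi}) and is the only qualitative departure from Lemma \ref{PhiLemma}. Collecting the estimates yields $\norm{\Psi(t)}_{H}\leq C\left(e^{-\lambda t}t + \alpha(t)\right)$ for all $t\geq T^{*}$, and since $t\,e^{-\lambda t}\to 0$ and $\alpha(t)\to 0$ by Assumption \ref{A:LearningRateAssumption}, the limit (\ref{PsiLimit}) follows immediately.
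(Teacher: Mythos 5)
Your proposal is correct and follows essentially the same route as the paper: the same PDE for $\Psi$, the same Duhamel representation with the exponentially stable semigroup $S^{\dagger}$, and the decay rates from Lemmas \ref{L:DerivativeBound1} and \ref{PhiLemma} fed into the convolution. In fact, you make explicit a detail the paper leaves implicit, namely that the factor $t$ in $e^{-\lambda t}t$ arises from the resonant convolution $\int e^{-\lambda(t-\tau)}e^{-\lambda\tau}\,d\tau$ of two exponentials with the same rate.
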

\begin{proof}
$\Psi(t)$ satisfies the PDE
\begin{eqnarray}
\frac{\partial \Psi}{\partial t}(t,x) = - A^{\dagger} \Psi(t,x) +\phi(t,x) - \frac{\partial \hat u^{\ast}}{\partial t}(t,x).
\label{PsiPDE}
\end{eqnarray}

Exactly, as it was done in Lemma \ref{PhiLemma} we can show that $t\mapsto \frac{\partial \hat u^{\ast}}{\partial t}(t)$ is globally Lipschitz. Together with Assumption \ref{A:Assumption0} we write
\begin{eqnarray}
\Psi(t) = S^{\dagger}(t) \Psi(0) + \int_0^t S^{\dagger}(t-\tau) \left(\phi(\tau)-\frac{\partial \hat u^{\ast}}{\partial t}(\tau) \right)d \tau,\notag
\end{eqnarray}
where $S^{\dagger}(t)$ is the analytic contraction semigroup generated by $A^{\dagger}$ satisfying $\norm{S^{\dagger}(t)}_{H} \leq  e^{- \lambda t }$ with $\lambda > 0$. Using the same reasoning as in Lemma \ref{PhiLemma}, and the fact that $\lim_{t\rightarrow\infty}\left(\norm{\phi(t)}_{H}+\norm{\frac{\partial \hat u^{\ast}}{\partial t}(t)}_{H}\right)=0$, we can prove (\ref{PsiLimit}). The decay rates for $\norm{\phi(t)}_{H}$ and $\norm{\frac{\partial \hat u^{\ast}}{\partial t}(t)}_{H}$ from Lemma \ref{PhiLemma} and \ref{L:DerivativeBound1} respectively prove (\ref{Eq:DecayRatePsi}), the same way (\ref{Eq:DecayRatePhi}) was proven. This concludes the proof of the lemma.
\end{proof}

\subsection{Proof of Theorem \ref{ConvergenceTheorem}}\label{SS:ProofConv}

Let us now return to the equation for $\theta(t)$.
\begin{eqnarray}\label{Eq:DynamicsTheta}
\frac{d \theta}{dt} &=& - \alpha(t) \bigg{(}  \left( \nabla_{\theta} f ( \theta(t) ), \hat u(t) \right) + \gamma \theta(t) \bigg{)} \notag \\
&=& - \alpha(t) \nabla_{\theta} J(\theta(t) ) -  \alpha(t) \bigg{(} \left( \nabla_{\theta} f ( \theta(t) ), \hat u^{\ast}(t) \right) - \left(  \nabla_{\theta} f ( \theta(t) ), \hat u(t)  \right) \bigg{)} \notag \\
&=&  - \alpha(t) \nabla_{\theta} J(\theta(t) ) - \alpha(t) \left(  \nabla_{\theta} f ( \theta(t) ), \Psi(t) \right).
\end{eqnarray}

The second term on the RHS will converge to zero as $t \rightarrow \infty$ due to (\ref{PsiLimit}). This means that asymptotically $\theta$ will be updated in the direction of steepest descent. Theorem \ref{ConvergenceTheorem} rigorously proves this along with proving convergence of $\theta(t)$ to a critical point of $J(\theta)$.

The structure of the proof proceeds in a spirit similar to \cite{SirignanoSpiliopoulos2017} with certain differences that will be highlighted below as needed. For completeness, we present the whole argument with the proper adjustments.

Let $\epsilon>0$ be given and let $\mu=\mu(\epsilon)>0$ to be chosen later on. We define the following cycle of times
\begin{eqnarray*}
0=\sigma_0 \leq \tau_1 \leq \sigma_1 \leq \tau_2 \leq \sigma_2 \leq \dots
\end{eqnarray*}
where for $k=1,2,\cdots$
\begin{align*}
\tau_{k}&=\inf\left\{t>\sigma_{k-1}:  \|\nabla J(\theta(t))\|\geq \epsilon\right\},\nonumber\\
\sigma_{k}&=\sup\left\{t>\tau_{k}:  \frac{\|\nabla J(\theta(\tau_{k}))\|}{2}\leq \|\nabla J(\theta(s))\|\leq 2\|\nabla J(\theta(\tau_{k}))\| \text{ for all }s\in[\tau_{k},t] \text{ and }\int_{\tau_{k}}^{t}\alpha(s)ds\leq \mu\right\}.
\end{align*}

Essentially, the sequence of times $\{\sigma_k\}_{k\in\mathbb{N}}$ and $\{\tau_k\}_{k\in\mathbb{N}}$ keep track of the times in which $\norm{\nabla J(\theta(t))}$ is within a ball of radius $\epsilon$ and away from it.

Next, let us define the corresponding intervals of time $J_{k}=[\sigma_{k-1},\tau_{k})$ and $I_{k}=[\tau_{k},\sigma_{k})$. Clearly, when $t\in J_{k}$, then we have that $ \|\nabla J(\theta(t))\|<\epsilon$.

Let us now go back to (\ref{Eq:DynamicsTheta}) and define the integral term
\begin{align}
\Delta_{s,t}&=\int_{s}^{t}\alpha(\rho) \left(  \nabla_{\theta} f ( \theta(\rho) ), \Psi(\rho) \right)d\rho.\label{Eq:RemainderTheta}
\end{align}

We first show that $\Delta_{\tau_{k},\sigma_{k}}$ decays  to zero. In particular, we have the following lemma.
\begin{lemma}\label{L:DeltaDecay}
Assume that Assumptions \ref{A:Assumption0}, \ref{A:LearningRateAssumption} and \ref{AssumptionLinear1} hold. Let us fix some $\eta>0$. Then, we have that $\lim_{k\rightarrow\infty}\norm{\Delta_{\tau_k,\sigma_{k}+\eta}}_{2}=0$.
\end{lemma}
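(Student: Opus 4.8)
The plan is to bound $\norm{\Delta_{\tau_k,\sigma_k+\eta}}_2$ directly by pulling out the supremum of $\norm{\Psi}_H$ over the relevant interval and controlling the remaining weighted integral of $\alpha$. First I would apply Cauchy--Schwarz inside the $L^2(U)$ inner product together with Assumption \ref{AssumptionLinear1} on $\nabla_\theta f$ to obtain the pointwise estimate
\begin{align}
\left| \left( \nabla_\theta f(\theta(\rho)), \Psi(\rho) \right) \right| &\leq \norm{\nabla_\theta f(\theta(\rho))}_H \, \norm{\Psi(\rho)}_H \leq C \norm{\Psi(\rho)}_H. \nonumber
\end{align}
Substituting this into the definition \eqref{Eq:RemainderTheta} yields
\begin{align}
\norm{\Delta_{\tau_k,\sigma_k+\eta}}_2 &\leq C \int_{\tau_k}^{\sigma_k+\eta} \alpha(\rho) \norm{\Psi(\rho)}_H \, d\rho. \nonumber
\end{align}

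The key point is then to insert the decay rate for $\norm{\Psi(t)}_H$ from Lemma \ref{L:ConvergenceRatePsi0}. For $k$ large enough that $\tau_k \geq T^{*}$ (which holds eventually, since $\tau_k \to \infty$), equation \eqref{Eq:DecayRatePsi} gives $\norm{\Psi(\rho)}_H \leq C(e^{-\lambda \rho}\rho + \alpha(\rho))$ throughout the interval of integration. Hence
\begin{align}
\norm{\Delta_{\tau_k,\sigma_k+\eta}}_2 &\leq C \int_{\tau_k}^{\sigma_k+\eta} \alpha(\rho)\left( e^{-\lambda \rho}\rho + \alpha(\rho) \right) d\rho. \nonumber
\end{align}
I would handle the two resulting terms separately. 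For the first, since $\alpha$ is bounded (being convergent to $0$), the integrand is dominated by $C e^{-\lambda \rho}\rho$, whose integral over $[\tau_k,\infty)$ is the tail of a convergent integral and therefore tends to $0$ as $\tau_k \to \infty$. For the second term, I would use $\int_{\tau_k}^{\sigma_k+\eta}\alpha^2(\rho)\,d\rho \leq \int_{\tau_k}^{\infty}\alpha^2(\rho)\,d\rho$, which is the tail of the convergent integral $\int_0^\infty \alpha^2(s)\,ds < \infty$ guaranteed by Assumption \ref{A:LearningRateAssumption}; this tail also vanishes as $\tau_k \to \infty$.

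Since both contributions vanish as $k \to \infty$ (because $\tau_k \to \infty$), I would conclude $\lim_{k\to\infty}\norm{\Delta_{\tau_k,\sigma_k+\eta}}_2 = 0$. The main obstacle I anticipate is simply verifying that $\tau_k \to \infty$, which is needed so that eventually $\tau_k \geq T^{*}$ and so that the tails of the convergent integrals are taken over intervals receding to infinity; this follows because the $\tau_k$ are strictly increasing stopping times that cannot accumulate at a finite value (an accumulation point would force $\norm{\nabla J(\theta(t))}$ to equal $\epsilon$ on a degenerate interval while the dynamics \eqref{Eq:DynamicsTheta} keep $\theta$, and hence $\nabla J(\theta(\cdot))$, continuous and moving). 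Everything else reduces to the decay estimate \eqref{Eq:DecayRatePsi} and the integrability conditions on $\alpha$, which are available by the cited lemma and Assumption \ref{A:LearningRateAssumption}.
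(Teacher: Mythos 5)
Your proof is correct and takes essentially the same route as the paper: Cauchy--Schwarz plus the uniform bound on $\nabla_{\theta}f$ from Assumption \ref{AssumptionLinear1}, the decay rate for $\norm{\Psi}_{H}$ from Lemma \ref{L:ConvergenceRatePsi0}, and the integrability conditions on $\alpha$ from Assumption \ref{A:LearningRateAssumption}. The only cosmetic difference is that the paper bounds $\sup_{t>0}\norm{\Delta_{0,t}}_{2}$ by a single convergent integral over $[0,\infty)$ and then invokes the vanishing of its tails, whereas you estimate the tail integrals $\int_{\tau_k}^{\sigma_k+\eta}$ directly and, unlike the paper, explicitly note the (true, and implicitly used) fact that $\tau_k\rightarrow\infty$.
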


\begin{proof}[Proof of Lemma \ref{L:DeltaDecay}]
We notice that for some constant $C<\infty$ that may change from line to line
\begin{align}
\sup_{t>0}\norm{\Delta_{0,t}}_{2}&\leq \int_{0}^{\infty}\alpha(\rho) \norm{ \left( \nabla_{\theta} f ( \theta(\rho) ), \Psi(\rho) \right)}_{2}d\rho\nonumber\\
&\leq \int_{0}^{\infty}\alpha(\rho) \norm{  \nabla_{\theta} f ( \theta(\rho) )}_{H} \norm{\Psi(\rho) }_{H}d\rho\nonumber\\
&\leq C \int_{0}^{\infty}\alpha(\rho)  \norm{\Psi(\rho) }_{H}d\rho\nonumber\\
&\leq C \int_{0}^{\infty}\left[\alpha^{2}(\rho)  +\alpha(\rho)\rho e^{-\lambda \rho}\right]d\rho\nonumber\\
&\leq C<\infty,\nonumber
\end{align}
where the boundedness of $\norm{  \nabla_{\theta} f ( \theta )}_{H} $ together with the decay rates from Lemma \ref{L:ConvergenceRatePsi0} were used. This immediately proves that $\lim_{k\rightarrow\infty}\norm{\Delta_{\tau_k,\sigma_{k}+\eta}}_{2}=0$ concluding the proof of the lemma.
\end{proof}

\begin{lemma}\label{L:LearningRate}
Assume that Assumptions \ref{A:Assumption0}, \ref{A:LearningRateAssumption} and \ref{AssumptionLinear1} hold. Denote by  $L_{\nabla J}$ to be the Lipschitz constant of $\nabla J$. For given $\epsilon>0$, let $\mu$ be such that $3\mu+\frac{\mu}{8\epsilon}=\frac{1}{2 L_{\nabla J}}$. Then, for $k$ large enough and for $\eta>0$ small enough (potentially depending on $k$), one has $\int_{\tau_{k}}^{\sigma_{k}+\eta}\alpha(s)ds>\mu$. In addition, we also have
$\frac{\mu}{2}\leq\int_{\tau_{k}}^{\sigma_{k}}\alpha(s)ds\leq\mu$.
\end{lemma}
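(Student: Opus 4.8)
The plan is to show that, for $k$ large, the constraint that terminates the interval $I_k=[\tau_k,\sigma_k)$ is the integral constraint $\int_{\tau_k}^{t}\alpha(s)ds\le\mu$ and not the gradient-window constraint; once this is established both assertions follow immediately. The starting point is to integrate the $\theta$-dynamics \eqref{Eq:DynamicsTheta} over $[\tau_k,t]$, giving
\begin{align*}
\theta(t)-\theta(\tau_k)=-\int_{\tau_k}^{t}\alpha(\rho)\nabla_\theta J(\theta(\rho))d\rho-\Delta_{\tau_k,t},
\end{align*}
with $\Delta_{\tau_k,t}$ the remainder \eqref{Eq:RemainderTheta}. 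Writing $g_k=\norm{\nabla J(\theta(\tau_k))}_2$, continuity of $t\mapsto\norm{\nabla J(\theta(t))}_2$ and the definition of $\tau_k$ give $g_k=\epsilon$, and on $[\tau_k,t]$, as long as the window constraint holds, $\norm{\nabla J(\theta(\rho))}_2\le 2g_k=2\epsilon$. The upper bound $\int_{\tau_k}^{\sigma_k}\alpha(s)ds\le\mu$ is then immediate from the definition of $\sigma_k$ and continuity of $t\mapsto\int_{\tau_k}^{t}\alpha$.

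For the main estimate I would run a first-exit (bootstrap) argument. Let $t^{*}$ be the first time at which $\int_{\tau_k}^{t}\alpha=\mu$; this is finite since $\int_0^\infty\alpha=\infty$ by Assumption \ref{A:LearningRateAssumption}. On any subinterval of $[\tau_k,t^{*}]$ where the window constraint holds, the displacement is controlled by
\begin{align*}
\norm{\theta(t)-\theta(\tau_k)}_2\le 2g_k\int_{\tau_k}^{t}\alpha(\rho)d\rho+\norm{\Delta_{\tau_k,t}}_2\le 2g_k\mu+\norm{\Delta_{\tau_k,t}}_2.
\end{align*}
By Lemma \ref{L:DeltaDecay} (using $\tau_k\to\infty$, so that $\sup_{t}\norm{\Delta_{\tau_k,t}}_2\to0$ as $k\to\infty$), for $k$ large one has $\norm{\Delta_{\tau_k,t}}_2\le\mu/8$ for all $t$. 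Combining with the Lipschitz property of $\nabla J$ and using $g_k\ge\epsilon$ yields, for $k$ large,
\begin{align*}
\norm{\nabla J(\theta(t))-\nabla J(\theta(\tau_k))}_2\le L_{\nabla J}\left(2g_k\mu+\frac{\mu}{8}\right)\le g_k L_{\nabla J}\left(2\mu+\frac{\mu}{8\epsilon}\right)<\frac{g_k}{2},
\end{align*}
where the final strict inequality uses the choice $3\mu+\frac{\mu}{8\epsilon}=\frac{1}{2L_{\nabla J}}$, which leaves slack $2\mu+\frac{\mu}{8\epsilon}<\frac{1}{2L_{\nabla J}}$. This forces $\norm{\nabla J(\theta(t))}_2$ to lie strictly inside $(g_k/2,3g_k/2)\subset(g_k/2,2g_k)$, so the window constraint can never be the first one violated on $[\tau_k,t^{*}]$.

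Consequently $\sigma_k=t^{*}$ for $k$ large, i.e. the terminating constraint is $\int_{\tau_k}^{\sigma_k}\alpha=\mu$. The first assertion follows because then $\int_{\tau_k}^{\sigma_k+\eta}\alpha=\mu+\int_{\sigma_k}^{\sigma_k+\eta}\alpha>\mu$ for every $\eta>0$ (since $\alpha>0$), and the two-sided bound $\frac{\mu}{2}\le\int_{\tau_k}^{\sigma_k}\alpha\le\mu$ holds a fortiori. Alternatively, to get the lower bound without invoking the equality, one supposes $\int_{\tau_k}^{\sigma_k}\alpha<\mu/2$ and reruns the displacement estimate with $\mu/2$ in place of $\mu$: the same computation shows the gradient stays strictly inside the window while the integral stays strictly below $\mu$, so neither constraint is saturated at $\sigma_k$, contradicting the maximality of $\sigma_k$.

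The main obstacle is the bootstrap step: one must verify that the window constraint stays strictly satisfied all the way up to the integral-saturation time $t^{*}$, which requires the uniform-in-$t$ smallness of $\norm{\Delta_{\tau_k,t}}_2$ from Lemma \ref{L:DeltaDecay} to hold on the whole interval $[\tau_k,t^{*}]$ rather than at a single endpoint, and the constant $3\mu+\frac{\mu}{8\epsilon}=\frac{1}{2L_{\nabla J}}$ must be arranged so that the bound on $\norm{\nabla J(\theta(t))-\nabla J(\theta(\tau_k))}_2$ lands strictly below $g_k/2$ after the substitution $g_k\ge\epsilon$ in the term $\frac{\mu}{8g_k}\le\frac{\mu}{8\epsilon}$. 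A secondary point to check is that $g_k=\epsilon$ exactly (so the half-width $g_k/2$ is bounded below by $\epsilon/2$) and that $\tau_k\to\infty$, which legitimizes invoking the decay of $\Delta_{\tau_k,t}$.
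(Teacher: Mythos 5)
Your proof is correct and rests on exactly the same ingredients as the paper's: the displacement estimate obtained by integrating the $\theta$-dynamics over $[\tau_k,t]$, the smallness of $\Delta_{\tau_k,\cdot}$ from Lemma \ref{L:DeltaDecay} (whose proof, via absolute integrability of $\rho\mapsto\alpha(\rho)\norm{(\nabla_\theta f(\theta(\rho)),\Psi(\rho))}_2$, indeed supplies the uniform-in-$t$ tail bound $\sup_{t\geq\tau_k}\norm{\Delta_{\tau_k,t}}_2\to 0$ that your bootstrap needs), and the Lipschitz continuity of $\nabla J$. The difference is organizational rather than substantive: the paper argues by contradiction, assuming $\int_{\tau_k}^{\sigma_k+\eta}\alpha\leq\mu$, bounding the gradient by $3\norm{\nabla J(\theta(\tau_k))}_2$ on the slightly extended interval $[\tau_k,\sigma_k+\eta]$ (this is where the coefficient $3$ in the definition of $\mu$ is used), and concluding that the window constraint would still hold at $\sigma_k+\eta$, contradicting the maximality of $\sigma_k$; you instead run a direct first-exit argument up to the integral-saturation time $t^*$, which only needs the factor $2$ coming from the window itself, with the slack $2\mu+\frac{\mu}{8\epsilon}<3\mu+\frac{\mu}{8\epsilon}=\frac{1}{2L_{\nabla J}}$ providing the strict inequality that keeps the bootstrap going. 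Your route yields the slightly stronger conclusion that $\sigma_k=t^*$, i.e. $\int_{\tau_k}^{\sigma_k}\alpha(s)ds=\mu$ exactly for large $k$, from which both assertions are immediate, whereas the paper only extracts the two-sided bound $[\mu/2,\mu]$. One inaccuracy worth flagging: your claim that $g_k=\norm{\nabla J(\theta(\tau_k))}_2=\epsilon$ exactly is false in general --- if the window constraint is what terminates $I_{k-1}$ (the gradient norm hitting $2\norm{\nabla J(\theta(\tau_{k-1}))}_2\geq 2\epsilon$), then $\tau_k=\sigma_{k-1}$ and $g_k>\epsilon$. Fortunately this is harmless, since every step of your argument (like the paper's) uses only $g_k\geq\epsilon$, namely in $\mu/8\leq g_k\mu/(8\epsilon)$, together with the relative window $[g_k/2,2g_k]$.
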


\begin{proof}[Proof of Lemma \ref{L:LearningRate}]
The proof proceeds by contradiction. Let us assume that $\int_{\tau_{k}}^{\sigma_{k}+\eta}\alpha(s)ds\leq \mu$ and let $\delta>0$ be such that $\delta<\mu/8$. In addition, without loss of generality, we can assume that for the given $k$, $\eta$ is so small such that for any $s\in[\tau_{k},\sigma_{k}+\eta]$ one has $\norm{\nabla J(\theta(s))}_{2}\leq 3\norm{\nabla J(\theta (\tau_{k}))}_{2}$.

Then, invoking (\ref{Eq:DynamicsTheta}) we have
\begin{align}
\norm{\theta(\sigma_{k}+\eta)-\theta(\tau_{k})}_{2}&\leq  \int_{\tau_{k}}^{\sigma_{k}+\eta} \alpha(t) \norm{\nabla_{\theta} J(\theta(t) )}_{2}dt+ \norm{\Delta_{\tau_k,\sigma_{k}+\eta}}_{2}\nonumber\\
&\leq  3\norm{\nabla J(\theta (\tau_{k}))}_{2} \mu + \norm{\Delta_{\tau_k,\sigma_{k}+\eta}}_{2}. \nonumber
\end{align}

By Lemma \ref{L:DeltaDecay} we have that for $k$ large enough, $\norm{\Delta_{\tau_k,\sigma_{k}+\eta}}_{2}\leq \delta<\mu/8$. In addition, we also have by definition that  $\frac{\epsilon}{\norm{\nabla J(\theta (\tau_{k}))}_{2}}\leq 1$. The combination of these two results gives
\begin{align}
\norm{\theta(\sigma_{k}+\eta)-\theta(\tau_{k})}&\leq   \norm{\nabla J(\theta (\tau_{k}))}_{2} \left(3 \mu +\frac{\mu}{8\epsilon}\right)\leq   \norm{\nabla J(\theta (\tau_{k}))}_{2} \frac{1}{2 L_{\nabla J}}.\nonumber
\end{align}

This means that
\begin{align}
\norm{\nabla J(\theta(\sigma_{k}+\eta))-\nabla J( \theta(\tau_{k}))}_{2}&\leq L_{\nabla J} \norm{\theta(\sigma_{k}+\eta)-\theta(\tau_{k})}_{2}\leq \frac{1}{2} \norm{\nabla J(\theta (\tau_{k}))}_{2}.\nonumber
\end{align}

Then, this would yield
\begin{align}
\frac{1}{2} \norm{\nabla J(\theta (\tau_{k}))}_{2}&\leq \norm{\nabla J(\theta (\sigma_{k}+\eta))}_{2}\leq 2  \norm{\nabla J(\theta (\tau_{k}))}_{2}.\nonumber
\end{align}

However, this is a contradiction, because that would mean that $\int_{\tau_{k}}^{\sigma_{k}+\eta}\alpha(s)ds>\mu$, since otherwise $\sigma_{k}+\eta \in[\tau_{k},\sigma_{k}]$ which cannot happen because $\eta>0$. This concludes the proof of the first part of the lemma. The proof of the second part of the lemma goes as follows.  By its own definition, we have that $\int_{\tau_{k}}^{\sigma_{k}}\alpha(s)ds\leq \mu$. Next, we show that $\int_{\tau_{k}}^{\sigma_{k}}\alpha(s)ds\geq \mu/2$. We have shown that $\int_{\tau_{k}}^{\sigma_{k}+\eta}\alpha(s)ds> \mu$. For $k$ large enough and $\eta$ small enough we can choose that $\int_{\sigma_{k}}^{\sigma_{k}+\eta}\alpha(s)ds\leq \mu/2$. The conclusion then follows. This concludes the proof of the lemma.
\end{proof}

\begin{lemma}\label{L:DecayJ}
Assume that Assumptions \ref{A:Assumption0}, \ref{A:LearningRateAssumption} and \ref{AssumptionLinear1} hold. Assume that there exists an infinite number of intervals $I_{k}=[\tau_{k},\sigma_{k})$. Then, there is a fixed $\zeta_{1}>0$ that depends on $\epsilon$ such that for $k$ large enough
\begin{align}
J(\theta_{\sigma_{k}})-J(\theta_{\tau_{k}})\leq -\zeta_{1}.\nonumber
\end{align}
\end{lemma}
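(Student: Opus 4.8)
The plan is to differentiate the objective along the trajectory and show that on each descent interval $I_{k}=[\tau_{k},\sigma_{k})$ the genuine gradient-descent term dominates the perturbation caused by replacing the exact adjoint $\hat u^{\ast}$ by the time-relaxed adjoint $\hat u$. First I would use that $J\in C^{2}$ (Assumption \ref{AssumptionLinear2}) and $\theta\in\mathcal{C}^{1}$, together with the dynamics (\ref{Eq:DynamicsTheta}), to compute by the chain rule
\begin{align}
\frac{d}{dt}J(\theta(t))&=\nabla J(\theta(t))^{\top}\frac{d\theta}{dt}(t)\nonumber\\
&=-\alpha(t)\norm{\nabla J(\theta(t))}_{2}^{2}-\alpha(t)\,\nabla J(\theta(t))^{\top}\left(\nabla_{\theta} f(\theta(t)),\Psi(t)\right),\nonumber
\end{align}
and then integrate over $[\tau_{k},\sigma_{k}]$ to obtain a decomposition of $J(\theta_{\sigma_{k}})-J(\theta_{\tau_{k}})$ into a descent term $-\int_{\tau_{k}}^{\sigma_{k}}\alpha(t)\norm{\nabla J(\theta(t))}_{2}^{2}dt$ and an error term involving $\Psi$.

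For the descent term I would exploit the definition of $\sigma_{k}$, which guarantees $\norm{\nabla J(\theta(t))}_{2}\geq \tfrac12\norm{\nabla J(\theta(\tau_{k}))}_{2}$ for all $t\in[\tau_{k},\sigma_{k}]$, together with the lower bound $\int_{\tau_{k}}^{\sigma_{k}}\alpha(s)ds\geq \mu/2$ from Lemma \ref{L:LearningRate}. This yields
\begin{align}
-\int_{\tau_{k}}^{\sigma_{k}}\alpha(t)\norm{\nabla J(\theta(t))}_{2}^{2}\,dt&\leq -\frac{\mu}{8}\norm{\nabla J(\theta(\tau_{k}))}_{2}^{2}.\nonumber
\end{align}
For the error term I would bound $\norm{\nabla J(\theta(t))}_{2}\leq 2\norm{\nabla J(\theta(\tau_{k}))}_{2}$ on $I_{k}$ and use Cauchy--Schwarz and the boundedness of $\norm{\nabla_{\theta} f(\theta)}_{H}$ (Assumption \ref{AssumptionLinear1}) to control it by $2\norm{\nabla J(\theta(\tau_{k}))}_{2}\,\beta_{k}$, where $\beta_{k}=C\int_{\tau_{k}}^{\infty}\alpha(\rho)\norm{\Psi(\rho)}_{H}d\rho$. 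The estimate in the proof of Lemma \ref{L:DeltaDecay}, which rests on the decay rate for $\norm{\Psi(\rho)}_{H}$ from Lemma \ref{L:ConvergenceRatePsi0}, shows $\beta_{k}\to 0$ as $k\to\infty$.

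Combining the two bounds and factoring out $\norm{\nabla J(\theta(\tau_{k}))}_{2}$ gives
\begin{align}
J(\theta_{\sigma_{k}})-J(\theta_{\tau_{k}})&\leq \norm{\nabla J(\theta(\tau_{k}))}_{2}\left(-\frac{\mu}{8}\norm{\nabla J(\theta(\tau_{k}))}_{2}+2\beta_{k}\right).\nonumber
\end{align}
Since $\norm{\nabla J(\theta(\tau_{k}))}_{2}\geq \epsilon$ by the definition of $\tau_{k}$, and $\beta_{k}\to 0$, for $k$ large enough the parenthesis is at most $-\tfrac{\mu\epsilon}{16}$, so
\begin{align}
J(\theta_{\sigma_{k}})-J(\theta_{\tau_{k}})&\leq -\frac{\mu\epsilon}{16}\norm{\nabla J(\theta(\tau_{k}))}_{2}\leq -\frac{\mu\epsilon^{2}}{16}=:-\zeta_{1},\nonumber
\end{align}
which is the claimed bound with $\zeta_{1}=\mu\epsilon^{2}/16>0$ depending only on $\epsilon$ (recall $\mu=\mu(\epsilon)$ from Lemma \ref{L:LearningRate}). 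I expect the main obstacle to be the error term: one must verify that the contribution of $\Psi$ is asymptotically negligible \emph{relative to} the descent, and the key to making the comparison uniform is that both terms carry a common factor $\norm{\nabla J(\theta(\tau_{k}))}_{2}$, so that after dividing it out the lower bound $\norm{\nabla J(\theta(\tau_{k}))}_{2}\geq\epsilon$ together with $\beta_{k}\to0$ closes the argument.
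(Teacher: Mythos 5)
Your proposal is correct and follows essentially the same route as the paper's proof: the same chain-rule decomposition of $J(\theta(\sigma_k))-J(\theta(\tau_k))$ into a descent term and a $\Psi$-error term, the same lower bound $\int_{\tau_k}^{\sigma_k}\alpha(s)ds\geq \mu/2$ from Lemma \ref{L:LearningRate} combined with $\norm{\nabla J(\theta(\rho))}_2\geq \tfrac12\norm{\nabla J(\theta(\tau_k))}_2$ for the descent term, and the same tail-of-a-convergent-integral argument (via the decay of $\norm{\Psi(\rho)}_H$ from Lemma \ref{L:ConvergenceRatePsi0}, as in Lemma \ref{L:DeltaDecay}) to make the error term negligible. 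The only cosmetic difference is that you factor out $\norm{\nabla J(\theta(\tau_k))}_2$ from the error term rather than invoking the paper's uniform bound $\sup_{\rho\geq 0}\norm{\nabla J(\theta(\rho))}_2<\infty$, and your constant $\zeta_1=\mu\epsilon^2/16$ is in fact the arithmetically correct one (the paper's concluding line $-\tfrac{\mu}{8}\epsilon^2+\tfrac{\mu}{16}\epsilon^2=-\tfrac{\mu}{8}\epsilon^2$ contains a harmless slip).
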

\begin{proof}[Proof of Lemma \ref{L:DecayJ}]
By chain rule we have that
\begin{align}
J(\theta(\sigma_{k})) - J (\theta(\tau_{k})) &=  -  \int_{\tau_{k}}^{\sigma_{k}} \alpha(\rho) \norm{ \nabla J (\theta(\rho)) }_2 d\rho + \int_{\tau_{k}}^{\sigma_{k}}  \alpha(\rho) \nabla J (\theta(\rho))\cdot \left(  \nabla_{\theta} f ( \theta(\rho) ), \Psi(\rho) \right)d\rho\nonumber\\
&=M_{1,k}+M_{2,k}. \nonumber
\end{align}

Let us first consider $M_{1,k}=-  \int_{\tau_{k}}^{\sigma_{k}} \alpha(\rho) \norm{ \nabla J (\theta(\rho)) }_2 d\rho$.  For $\rho\in[\tau_k,\sigma_k]$ we have that  $ \frac{\norm{\nabla J(\theta (\tau_{k}))}_{2}}{2}\leq \norm{\nabla J(\theta(\rho))}_{2}\leq 2\norm{\nabla J(\theta (\tau_{k}))}_{2} $. Thus, for sufficiently large $k$, we have by Lemma \ref{L:LearningRate}
\begin{align}
M_{1,k} &\leq  - \frac{ \norm{\nabla J (\theta(\tau_k))}_{2}^{2}}{4} \int_{\tau_{k}}^{\sigma_{k}} \alpha(\rho) d\rho \leq -  \frac{ \norm{\nabla J (\theta(\tau_k))}_{2}^2 }{8} \mu.\nonumber
\end{align}

Next, we address $M_{2,k}=\int_{\tau_{k}}^{\sigma_{k}}  \alpha(\rho) \nabla J (\theta(\rho))\cdot \left(  \nabla_{\theta} f ( \theta(\rho) ), \Psi(\rho) \right)d\rho$. Let us define
\begin{align}
\hat{M}_{s,t}&=\int_{s}^{t}  \alpha(\rho) \nabla J (\theta(\rho))\cdot \left(  \nabla_{\theta} f ( \theta(\rho) ), \Psi(\rho) \right)d\rho.\nonumber
\end{align}

Clearly, we have that $M_{2,k}=\hat{M}_{\tau_{k},\sigma_{k}}$.

We claim that $\sup_{\rho\geq 0} \norm{\nabla J (\theta(\rho))}_{2}<\infty$. For this purpose, we shall use the representation of $\nabla J(\theta)$ by (\ref{GradientObj}) together with the a-priori $H$ norm bounds for $\hat{u}^{*}$, $\nabla_{\theta} f(x,\theta)$ as well as the uniform bound on $\sup_{\rho\geq 0}\norm{\theta(\rho)}$ by (\ref{UnitormThetaBound}). These imply that indeed $\sup_{\rho\geq 0} \norm{\nabla J (\theta(\rho))}_{2}<\infty$.

Then, we have for some constant $C<\infty$ that may change from line to line
\begin{align}
\sup_{t\geq 0} \hat{M}_{0,t}&\leq C\int_{0}^{\infty} \alpha(\rho) \norm{\nabla J (\theta(\rho))}_{2}\norm{\nabla_{\theta}f(\theta(\rho))}_{H}\norm{\Psi(\rho)}_{H} d\rho\nonumber\\
&\leq C\int_{0}^{\infty} \alpha(\rho) \norm{\Psi(\rho)}_{H} d\rho\nonumber\\
&\leq C\int_{0}^{\infty} \left(\alpha^{2}(\rho) +\alpha(\rho)e^{-\lambda\rho}\rho\right)d\rho\nonumber\\
&\leq C<\infty. \nonumber
\end{align}
where we used the decay rate bound from Lemma \ref{PsiLimit}. The latter, then means that $M_{2,k}\rightarrow 0$ as $k\rightarrow\infty$.

Putting the above together, we get for $k$ large enough such that $|M_{2,k}|\leq \delta<\frac{\mu }{16}\epsilon^{2}$
\begin{align}
J(\theta(\sigma_{k})) - J (\theta(\tau_{k})) &\leq  -  \frac{ \norm{\nabla J (\theta(\tau_k))}_{2}^2 }{8} \mu+\delta\nonumber\\
&\leq -\frac{\mu}{8}\epsilon^{2}+ \frac{\mu }{16}\epsilon^{2}=-\frac{\mu }{8}\epsilon^{2}.\nonumber
\end{align}

Setting $\zeta_1=\frac{\mu}{8}\epsilon^{2}$ we conclude the proof of the lemma.
\end{proof}

\begin{lemma}\label{L:DecayJ2}
Assume that Assumptions \ref{A:Assumption0}, \ref{A:LearningRateAssumption} and \ref{AssumptionLinear1} hold. Assume that there exists an infinite number of intervals $I_{k}=[\tau_{k},\sigma_{k})$. Then, there is a fixed $0<\zeta_{2}<\zeta_{1}$  such that for $k$ large enough
\begin{align}
J(\theta_{\tau_{k}})-J(\theta_{\sigma_{k-1}})\leq \zeta_{2}.\nonumber
\end{align}
\end{lemma}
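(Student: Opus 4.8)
The plan is to reuse the chain-rule computation from the proof of Lemma \ref{L:DecayJ}, but now over the ``good'' interval $J_k=[\sigma_{k-1},\tau_k)$, and to observe that for an \emph{upper} bound both contributions are benign. First I would differentiate $J(\theta(t))$ along the trajectory via (\ref{Eq:DynamicsTheta}) and integrate over $[\sigma_{k-1},\tau_k]$, obtaining the same identity as in Lemma \ref{L:DecayJ}:
\[
J(\theta(\tau_k))-J(\theta(\sigma_{k-1}))=-\int_{\sigma_{k-1}}^{\tau_k}\alpha(\rho)\norm{\nabla J(\theta(\rho))}_2^2\,d\rho+\hat M_{\sigma_{k-1},\tau_k},
\]
where $\hat M_{s,t}$ is exactly the cross term introduced there. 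The first integral is non-positive because $\alpha(\cdot)\ge 0$, so it can only help the desired bound and may be dropped; hence it suffices to control $\hat M_{\sigma_{k-1},\tau_k}$.

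The key step is to recognize $\hat M_{\sigma_{k-1},\tau_k}$ as the tail of an absolutely convergent integral. Indeed, in the proof of Lemma \ref{L:DecayJ} it was established, using the uniform bounds on $\norm{\nabla J(\theta)}_2$ and $\norm{\nabla_\theta f(\theta)}_H$ together with the decay rate $\norm{\Psi(\rho)}_H\le C(e^{-\lambda\rho}\rho+\alpha(\rho))$ of Lemma \ref{L:ConvergenceRatePsi0}, that
\[
\int_0^\infty \alpha(\rho)\,|\nabla J(\theta(\rho))\cdot(\nabla_\theta f(\theta(\rho)),\Psi(\rho))|\,d\rho\le C\int_0^\infty\big(\alpha^2(\rho)+\alpha(\rho)e^{-\lambda\rho}\rho\big)\,d\rho<\infty.
\]
Therefore the remainder $R(t):=\int_t^\infty\alpha(\rho)\,|\nabla J(\theta(\rho))\cdot(\nabla_\theta f(\theta(\rho)),\Psi(\rho))|\,d\rho$ satisfies $R(t)\to 0$ as $t\to\infty$, and $|\hat M_{\sigma_{k-1},\tau_k}|\le R(\sigma_{k-1})$.

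To close the argument I would check that $\sigma_{k-1}\to\infty$, so that $R(\sigma_{k-1})\to 0$. This uses the standing hypothesis that infinitely many intervals $I_k$ exist: the $\sigma_{k-1}$ form an increasing sequence, and by Lemma \ref{L:LearningRate} each $I_k$ contributes $\int_{\tau_k}^{\sigma_k}\alpha(s)\,ds\ge\mu/2$; were $\sigma_{k-1}$ bounded, these disjoint intervals would force $\int_0^\infty\alpha(s)\,ds$ to diverge within a bounded region, a contradiction. Given $\sigma_{k-1}\to\infty$, for $k$ large we get $|\hat M_{\sigma_{k-1},\tau_k}|\le\zeta_2$ for any prescribed constant, and in particular we may fix any $\zeta_2\in(0,\zeta_1)$, yielding $J(\theta_{\tau_k})-J(\theta_{\sigma_{k-1}})\le\zeta_2$.

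I expect no real obstacle. Unlike Lemma \ref{L:DecayJ}, this upper bound does not require the lower bound $\int_{\tau_k}^{\sigma_k}\alpha\ge\mu/2$ on the good-interval length, and the smallness of $\norm{\nabla J}_2$ on $J_k$ is not even needed. The only point deserving attention is the explicit requirement $\zeta_2<\zeta_1$: this is what later guarantees a strict net decrease $\zeta_1-\zeta_2>0$ of $J$ over each full cycle $[\sigma_{k-1},\sigma_k)$, and since $|\hat M_{\sigma_{k-1},\tau_k}|$ can be made arbitrarily small, any positive $\zeta_2<\zeta_1$ is admissible once $k$ is large enough.
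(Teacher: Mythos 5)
Your proof is correct and follows essentially the same route as the paper's: the chain-rule identity along (\ref{Eq:DynamicsTheta}), dropping the non-positive term $-\int_{\sigma_{k-1}}^{\tau_k}\alpha(\rho)\norm{\nabla J(\theta(\rho))}_2^2\,d\rho$, and bounding the cross term as the tail of the absolutely convergent integral already established in the proof of Lemma \ref{L:DecayJ} via the decay rate of Lemma \ref{L:ConvergenceRatePsi0}. Your explicit verification that $\sigma_{k-1}\to\infty$ (using Lemma \ref{L:LearningRate} and local integrability of $\alpha$) is a detail the paper leaves implicit, and is a welcome addition rather than a deviation.
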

\begin{proof}[Proof of Lemma \ref{L:DecayJ2}]
Recall that $\norm{\nabla J(\theta(t))}_{2}\leq \epsilon$ for $t\in J_{k}=[\sigma_{k-1},\tau_{k}]$. By chain rule we have
\begin{align}
J(\theta(\tau_{k})) - J (\theta(\sigma_{k-1})) &=  -  \int_{\sigma_{k-1}}^{\tau_{k}} \alpha(\rho) \norm{ \nabla J (\theta(\rho)) }_2 d\rho + \int_{\sigma_{k-1}}^{\tau_{k}}  \alpha(\rho) \nabla J (\theta(\rho))\cdot \left(  \nabla_{\theta} f ( \theta(\rho) ), \Psi(\rho) \right)d\rho\nonumber\\
&\leq \int_{\sigma_{k-1}}^{\tau_{k}}  \alpha(\rho) \nabla J (\theta(\rho))\cdot \left(  \nabla_{\theta} f ( \theta(\rho) ), \Psi(\rho) \right)d\rho.\nonumber
\end{align}

As in the proof of Lemma \ref{L:DecayJ} we get that for $k$ large enough, the right hand side of the last display can be arbitrarily small. This concludes the proof of the lemma.
\end{proof}

Now we can conclude the proof of Theorem \ref{ConvergenceTheorem}.
\begin{proof}[Proof of Theorem \ref{ConvergenceTheorem}]
Fix an $\epsilon>0$. If there are finitely number of $\tau_{k}$, then there is a finite $T^{*}$ such that $\norm{\nabla J(\theta(t))}_{2}<\epsilon$ for $t\geq T^{*}$, which proves the theorem. So, we basically need to prove that there can only be finitely many $\tau_{k}$. So, let us assume that there are infinitely many instances of $\tau_{k}$. By Lemmas \ref{L:DecayJ} and \ref{L:DecayJ2} we have for sufficiently large $k$ that
\begin{align}
J(\theta_{\sigma_{k}})-J(\theta_{\tau_{k}})\leq -\zeta_{1}\nonumber\\
J(\theta_{\tau_{k}})-J(\theta_{\sigma_{k-1}})\leq \zeta_{2}\nonumber
\end{align}
with $0<\zeta_2<\zeta_1$. Let $N$ large enough so that the above relations hold simultaneously. Then we have
\begin{eqnarray*}
J(\theta(\tau_{n+1})) - J (\theta(\tau_{N})) = \sum_{k = N}^n \bigg{[} J(\theta(\sigma_{k})) - J (\theta(\tau_{k})) + J(\theta(\tau_{k+1})) - J (\theta(\sigma_{k})) \bigg{]} \leq  \sum_{k = N}^n ( - \zeta_1 + \zeta_2 )<0 .\nonumber
\end{eqnarray*}

Letting $n \rightarrow \infty$, we get that $J(\theta(\tau_{n+1})) \rightarrow - \infty$, which is a contradiction, since by definition $J(\theta) \geq 0$.  Thus, there must be at most  finitely many  $\tau_k$. This concludes the proof of the theorem.
\end{proof}
\section{Convergence rate in the strongly convex case}\label{S:ConvergenceRate}
We will now prove a convergence rate for $\theta(t)$. First, we need to strengthen the assumptions on the cost function $J(\cdot)$. Namely, we now assume that $J(\theta)$ is strongly convex.
\begin{assumption} \label{AssumptionLinear2b}
We assume the following conditions:
\begin{itemize}
\item $J(\cdot) \in C^2$.
\item There exists a unique global minimum $\theta^{\ast}$ where $\nabla_{\theta} J(\theta^{\ast}) = 0$. \item $H(\theta) =\nabla_{\theta \theta} J(\theta)$ is globally Lipschitz and $H(\theta^{\ast})$ is positive definite. That is, there exists a constant $q > 0$ such that
\begin{eqnarray}
\xi^{\top} H (\theta^{\ast})\xi \geq q \norm{\xi}_{2}^2.
\end{eqnarray}
\item The learning rate satisfies $\alpha(t) =C_{\alpha} a(t)$ where the learning rate magnitude $C_{\alpha}$ is selected such that $C_{\alpha} q > 1$. The learning rate function $a(t)$ satisfies $\displaystyle \lim_{t\rightarrow\infty} a(t)=0$ and
\begin{align}
\int_{0}^{\infty} a(s)ds&=\infty \textrm{ and } \int_{0}^{\infty} a^{2}(s)ds<\infty.\nonumber\\
\sup_{t\geq 0} \int_{0}^{t} a(s)e^{-\gamma\int_{s}^{t}a(r)dr}ds&<\infty \textrm{ and } \lim_{t\rightarrow\infty}\frac{a'(t)}{a(t)}=0.\nonumber
\end{align}
An example is $a(t) = \frac{1}{1 + t}$.
\end{itemize}
\end{assumption}

In this section we will assume without loss of generality that the learning rate is $\alpha(t) = \frac{1}{1+t}$. Even though this specific choice of the learning rate is not necessary for the results to hold, it will simplify the derivation of the convergence rate. The main result of this section is Theorem \ref{T:ConvergenceRate}.
\begin{theorem}\label{T:ConvergenceRate}
Let us assume that  $\alpha(t)=1/(1+t)$. In addition, assume that Assumptions \ref{A:Assumption0}, \ref{AssumptionLinear1} and \ref{AssumptionLinear2b} hold. Then we have that there exists a time $0<t_{0}<\infty$, such that for all $t\geq t_{0}$
\begin{eqnarray}
\norm{ \theta(t) - \theta^{\ast} }_{2} \leq C t^{-1/2}.
\end{eqnarray}
\end{theorem}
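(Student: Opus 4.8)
The plan is to establish a differential inequality for $\norm{\theta(t)-\theta^{\ast}}_2^2$ and then integrate it using a comparison argument. Writing $e(t)=\theta(t)-\theta^{\ast}$, I would start from the dynamics (\ref{Eq:DynamicsTheta}), namely
\[
\frac{d\theta}{dt}=-\alpha(t)\nabla_\theta J(\theta(t))-\alpha(t)\left(\nabla_\theta f(\theta(t)),\Psi(t)\right),
\]
and compute $\frac{d}{dt}\norm{e(t)}_2^2=2 e(t)\cdot \frac{d\theta}{dt}$. The leading term is $-2\alpha(t)\,e(t)\cdot\nabla_\theta J(\theta(t))$, and the remaining term is a perturbation controlled by $\norm{\Psi(t)}_H$, which by Lemma \ref{L:ConvergenceRatePsi0} decays like $C(\alpha(t)+t e^{-\lambda t})$.

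Next I would exploit strong convexity. Since $\nabla_\theta J(\theta^{\ast})=0$ and $H(\theta^{\ast})$ is positive definite with constant $q$, a Taylor/mean-value expansion of $\nabla_\theta J$ around $\theta^{\ast}$, together with the global Lipschitz continuity of the Hessian $H(\theta)$, gives a lower bound of the form $e(t)\cdot\nabla_\theta J(\theta(t))\geq q\norm{e(t)}_2^2 - L_H\norm{e(t)}_2^3$ for some constant $L_H$. Because Theorem \ref{ConvergenceTheorem} guarantees $\norm{\nabla J(\theta(t))}_2\to 0$ and strong convexity near $\theta^{\ast}$ forces $\theta(t)\to\theta^{\ast}$, for $t$ larger than some $t_1$ the error $\norm{e(t)}_2$ is small enough that the cubic term is dominated; one can then absorb it to obtain an effective convexity constant $\tilde q$ with $C_\alpha\tilde q>1$ still holding (this is where the strict inequality $C_\alpha q>1$ from Assumption \ref{AssumptionLinear2b} is used with room to spare). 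This yields, for $t\geq t_1$,
\[
\frac{d}{dt}\norm{e(t)}_2^2\leq -2\tilde q\,\alpha(t)\norm{e(t)}_2^2 + C\alpha(t)\norm{e(t)}_2\,\big(\alpha(t)+te^{-\lambda t}\big).
\]

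With $\alpha(t)=1/(1+t)$ I would then solve this linear differential inequality via the integrating factor $\exp\!\big(2\tilde q\int_{t_1}^t\alpha(s)ds\big)=(1+t)^{2\tilde q C_\alpha}/(1+t_1)^{2\tilde q C_\alpha}$. After bounding $\norm{e(t)}_2$ crudely by the uniform bound (\ref{UnitormThetaBound}) in the perturbation term (or bootstrapping), the forcing integral $\int_{t_1}^t (1+s)^{2\tilde qC_\alpha}\alpha^2(s)\,ds\sim\int (1+s)^{2\tilde qC_\alpha-2}ds \sim (1+t)^{2\tilde qC_\alpha-1}$ dominates, and dividing by the integrating factor produces the rate $\norm{e(t)}_2^2\leq C(1+t)^{-1}$, i.e. $\norm{e(t)}_2\leq Ct^{-1/2}$. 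The exponentially small contribution $te^{-\lambda t}$ integrates to something negligible and does not affect the rate. The main obstacle is the interplay between the quadratic term and the perturbation: to get the sharp $t^{-1/2}$ rate one must ensure the exponent $2\tilde q C_\alpha$ genuinely exceeds $1$ after the cubic correction is absorbed, which is precisely why convergence to $\theta^{\ast}$ (Theorem \ref{ConvergenceTheorem}) must be invoked first to localize $\theta(t)$ and why the strict condition $C_\alpha q>1$ is imposed; handling the $\norm{e(t)}_2$ factor (degree one, not two) in the perturbation term cleanly — either by Young's inequality to split it or by a bootstrap on the decay rate — is the delicate bookkeeping step.
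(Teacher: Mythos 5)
Your proposal is correct, and its core coincides with the paper's own argument: invoke Theorem \ref{ConvergenceTheorem} to localize $\theta(t)$ near $\theta^{\ast}$, Taylor-expand $\nabla_{\theta}J$ about $\theta^{\ast}$ using the Lipschitz Hessian so that the cubic error is absorbed into an effective strong-convexity constant, and then integrate the resulting linear differential inequality for $V(t)=\norm{\theta(t)-\theta^{\ast}}_{2}^{2}$ with the integrating factor $(1+t)^{2\tilde{q}C_{\alpha}}$, the strict condition $C_{\alpha}q>1$ ensuring the forcing integral yields $V(t)\leq Ct^{-1}$. Where you genuinely deviate is in controlling the perturbation $\alpha(t)\left(\nabla_{\theta}f(\theta(t)),\Psi(t)\right)$. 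The paper does not reuse the Section 3 estimate; it proves two dedicated lemmas (Lemmas \ref{L:Y_estimate} and \ref{L:ConvergenceRatePsi}) by energy estimates and ODE comparison to obtain $\limsup_{t\rightarrow\infty}t^{2}\left(\Psi(t),\Psi(t)\right)\leq C$, and it splits the cross term by Young's inequality into $\epsilon^{2}\alpha(t)^{2}V+C\epsilon^{-2}\left(\Psi,\Psi\right)$, absorbing the first piece into the dissipative term. You instead quote Lemma \ref{L:ConvergenceRatePsi0}, whose bound $\norm{\Psi(t)}_{H}\leq C\left(e^{-\lambda t}t+\alpha(t)\right)$ already gives the same $t^{-1}$ rate when $\alpha(t)=1/(1+t)$, and you handle the first-power factor $\norm{\theta(t)-\theta^{\ast}}_{2}$ by the uniform bound (\ref{UnitormThetaBound}) or a Young/bootstrap step. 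This shortcut is legitimate and renders the paper's Lemmas \ref{L:Y_estimate} and \ref{L:ConvergenceRatePsi} unnecessary for this particular learning rate; what the paper's energy-method re-derivation buys is a self-contained rate for $\Psi$ inside Section 4, independent of the semigroup convolution estimates behind Lemma \ref{L:ConvergenceRatePsi0}. A minor bookkeeping point in your favor: by retaining the factor of $2$ in $-2\tilde{q}\alpha(t)V$, your exponent condition $2\tilde{q}C_{\alpha}>1$ is weaker than the paper's requirement $bC_{\alpha}>1$ (the paper discards a factor of $2$ when passing from $-2\alpha(t)b_{0}V$ to $-\alpha(t)b_{0}V$); both follow from $C_{\alpha}q>1$ after localization, so either version closes.
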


The proof of this theorem will be a consequence of a series of lemmas. Let us recall the function $\phi(t,x) = u(t,x) - u^{\ast}(t,x)$ satisfying (\ref{PhiPDE})
\begin{eqnarray}
\frac{\partial \phi}{\partial t}(t,x) &= - A \phi(t,x) - \frac{\partial u^{\ast}}{\partial t}(t,x)\nonumber
%\label{TimeR2}
\end{eqnarray}
such that by Lemma \ref{L:DerivativeBound1}, $t\mapsto \frac{\partial u^{\ast}}{\partial t}$ is globally Lipschitz and $\norm{\frac{\partial u^{\ast}}{\partial t}}_{H}\leq C\frac{1}{1+t}<C\frac{1}{t}$.

Then, we have the following lemma.
\begin{lemma}\label{L:Y_estimate}
Consider the setting of Theorem \ref{T:ConvergenceRate}.  We have that there is a finite constant $C<\infty$ such that
\begin{align}
\limsup_{t \rightarrow \infty} t^2 \left( \phi(t), \phi(t) \right) &
\leq C.
\label{LimSup1}
\end{align}
In addition,  there exists a $t_0 >0$ such that for all $t \geq t_0$ and any $0 < p < 1$,
\begin{eqnarray*}
\left( \phi(t), \phi(t) \right) \leq K t^{-2p}.
\end{eqnarray*}
\end{lemma}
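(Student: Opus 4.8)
The plan is to analyze the energy $E(t) = \left( \phi(t), \phi(t) \right) = \norm{\phi(t)}^2_H$ directly via its time derivative, using the PDE (\ref{PhiPDE}) that $\phi$ satisfies. Differentiating and using the coercivity of $A$ from (\ref{Eq:CoerciveA}), I would compute
\begin{align}
\frac{d}{dt} \norm{\phi(t)}^2_H &= 2\left( \frac{\partial \phi}{\partial t}, \phi \right) = -2 (A\phi, \phi) - 2\left( \frac{\partial u^{\ast}}{\partial t}, \phi \right) \nonumber\\
&\leq -2\lambda \norm{\phi}^2_H + 2 \norm{\frac{\partial u^{\ast}}{\partial t}}_H \norm{\phi}_H. \nonumber
\end{align}
Since Lemma \ref{L:DerivativeBound1} gives $\norm{\frac{\partial u^{\ast}}{\partial t}}_H \leq C\alpha(t) = C/(1+t)$, the cross term is controlled. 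The strategy is then to absorb the cross term using Young's inequality in a way that preserves a strictly negative coefficient on $\norm{\phi}^2_H$: writing $2 \norm{\frac{\partial u^{\ast}}{\partial t}}_H \norm{\phi}_H \leq \lambda \norm{\phi}^2_H + \frac{1}{\lambda}\norm{\frac{\partial u^{\ast}}{\partial t}}^2_H$, I obtain the differential inequality $\frac{d}{dt} E(t) \leq -\lambda E(t) + \frac{C}{(1+t)^2}$.

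To extract the sharp $t^{-2}$ rate in (\ref{LimSup1}), the natural device is a weighted energy. I would set $W(t) = (1+t)^2 E(t)$ and compute $\frac{d}{dt} W(t) = 2(1+t) E(t) + (1+t)^2 \frac{d}{dt}E(t) \leq 2(1+t)E(t) - \lambda(1+t)^2 E(t) + C$. For $t$ large enough that $\lambda(1+t) > 4$, say, the term $2(1+t)E = \frac{2}{1+t} W$ is dominated by half of $\lambda(1+t)^2 E = \lambda(1+t) W$, so that $\frac{d}{dt} W(t) \leq -\frac{\lambda(1+t)}{2} W(t) + C$. A comparison/Gronwall argument on this inequality then forces $\limsup_{t\to\infty} W(t) \leq C'$ for a finite constant, which is exactly (\ref{LimSup1}). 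The second, weaker conclusion $E(t) \leq K t^{-2p}$ for $0<p<1$ and $t\geq t_0$ follows immediately from the first: since $t^2 E(t)$ is eventually bounded by $C$, we have $E(t) \leq C t^{-2} \leq K t^{-2p}$ for large $t$, the extra room $0<p<1$ giving a robust (non-sharp) statement that is convenient to invoke later.

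The main obstacle, and the reason for preferring the weighted-energy formulation over directly integrating the linear inequality $E' \leq -\lambda E + C(1+t)^{-2}$, is handling the interplay between the exponentially contracting semigroup and the only polynomially decaying forcing. A direct Duhamel estimate $E(t) \leq e^{-\lambda t} E(0) + \int_0^t e^{-\lambda(t-s)} \frac{C}{(1+s)^2}\,ds$ does yield the $t^{-2}$ rate, but establishing the precise constant in the $\limsup$ requires the observation (analogous to the computation of $I(t)$ in the proof of Lemma \ref{PhiLemma}) that $\int_0^t e^{-\lambda(t-s)} g(s)\,ds \sim \frac{1}{\lambda} g(t)$ when $g$ is slowly varying. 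I would therefore make the asymptotic comparison $\int_0^t e^{-\lambda(t-s)}(1+s)^{-2} ds = \Theta(t^{-2})$ explicit, exactly mirroring the $I(t)/\alpha(t) \to 1/\lambda$ argument already used in Lemma \ref{PhiLemma}, which keeps the step routine. Care must also be taken that all estimates hold only for $t \geq t_0$ with $t_0$ chosen large enough that the transient term $e^{-\lambda t}E(0)$ is negligible relative to the polynomial bound, which is why the statement is phrased with a threshold time $t_0$.
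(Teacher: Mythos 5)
Your proposal follows essentially the same route as the paper's proof: differentiate the energy $\left(\phi(t),\phi(t)\right)$, use the coercivity of $A$ (Assumption \ref{A:Assumption0}) and Young's inequality to absorb the cross term, invoke the bound $\norm{\frac{\partial u^{\ast}}{\partial t}}_{H}\leq C/(1+t)$ from Lemma \ref{L:DerivativeBound1}, and close with a comparison/Gronwall argument. The only structural difference is that the paper compares $Y(t)=\left(\phi(t),\phi(t)\right)$ against the explicit solution of the scalar ODE $v'=-2bv+Ct^{-2}$ and computes $\lim_{t\rightarrow\infty}t^{2}v(t)$, whereas you work with the weighted energy $W(t)=(1+t)^{2}E(t)$; that variant is sound and is in fact the very device the paper uses for $\Psi$ in Lemma \ref{L:ConvergenceRatePsi}, and your deduction of the $t^{-2p}$ bound from (\ref{LimSup1}) matches the paper's.

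There is, however, one incorrect step. Since $W=(1+t)^{2}E$, the dissipative term satisfies $\lambda(1+t)^{2}E=\lambda W$, \emph{not} $\lambda(1+t)W$ as you write. Hence the claimed inequality $\frac{d}{dt}W\leq-\frac{\lambda(1+t)}{2}W+C$ is not derivable; indeed it would prove too much, since comparison with $w'=-\frac{\lambda(1+t)}{2}w+C$ forces $w(t)\rightarrow 0$, i.e.\ $t^{2}\left(\phi(t),\phi(t)\right)\rightarrow 0$, which is stronger than what the forcing bound $\norm{\frac{\partial u^{\ast}}{\partial t}}_{H}\leq C/(1+t)$ can deliver in general. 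The repair is immediate: the correct computation reads
\begin{align}
\frac{d}{dt}W(t)\leq \frac{2}{1+t}W(t)-\lambda W(t)+C,\nonumber
\end{align}
so for $t\geq t_{0}$ with $\frac{2}{1+t_{0}}\leq\frac{\lambda}{2}$ one has $\frac{d}{dt}W\leq-\frac{\lambda}{2}W+C$, and Gronwall/comparison then yields $\limsup_{t\rightarrow\infty}W(t)\leq 2C/\lambda<\infty$, which is exactly (\ref{LimSup1}). With this correction, the remainder of your argument, including the passage to the $t^{-2p}$ bound, goes through unchanged.
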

\begin{proof}[Proof of Lemma \ref{L:Y_estimate}]
For notational convenience we set below $Y(t) = \left( \phi(t), \phi(t) \right)$. First we calculate
\begin{eqnarray*}
\frac{d Y}{d t} (t)= - 2 \left( \phi(t), A \phi(t) \right) - 2 \left( \phi(t), \frac{\partial u^{\ast}}{\partial t}(t) \right).
\end{eqnarray*}

For $\epsilon>0$ to be chosen later on and using the coercivity Assumption \ref{A:Assumption0},  we then have the inequality (omitting the argument $t$ for notational convenience)
\begin{eqnarray}
\frac{d Y}{d t} &\leq& - 2 \left( \phi, A \phi \right) + 2 \left|\left( \phi \epsilon,  \frac{1}{\epsilon} \frac{\partial u^{\ast}}{\partial t}(t) \right)\right| \notag \\
&\leq&  - 2 \lambda ( \phi, \phi ) +  \frac{1}{2} \epsilon^2 ( \phi, \phi ) + \frac{1}{2\epsilon^{2}} \left( \frac{\partial u^{\ast}}{\partial t}, \frac{\partial u^{\ast}}{\partial t} \right) \notag \\
&\leq& -2 (\lambda - \frac{\epsilon^2}{2}  )Y +  \frac{C}{2\epsilon^{2}}  t^{-2} \notag \\
&=& -2 b_{\epsilon} Y + C_{\epsilon} t^{-2},\notag
\end{eqnarray}
where we have used Young's inequality. The constant $b_{\epsilon} = \lambda - \frac{\epsilon^2}{2}  $ and $C_{\epsilon} = \frac{C}{2\epsilon^{2}}$. We can select $\epsilon$ such that $b_{\epsilon} > 0$.

Denoting now for notational convenience $b=b^{\epsilon}$ and with some abuse of notation setting $C=C_{\epsilon}$, let's construct the ODE
\begin{eqnarray}
\frac{d v }{d t} &=& - 2 b v + C t^{-2}, \notag \\
v(1) &=& Y(1).\notag
\end{eqnarray}

Define $\xi = Y - v$. Then, we have that $\xi(1) = 0$ and for $t \geq 1$,
\begin{eqnarray}
\frac{d \xi}{dt} &=& \frac{d Y}{dt} - \frac{dv }{dt}  \notag \\
&\leq& -2 b Y + C t^{-2} - \bigg{(} - 2 b v + C t^{-2} \bigg{)} \notag \\
&=& - 2 b ( Y -   v ) \notag \\
%&\leq& - 2b ( Y -   v )  \notag \\
&=& - 2b \xi. \notag
%\xi(1) &=& 0.\notag
\end{eqnarray}

By Gronwall's inequality $\xi \leq 0$
%\begin{eqnarray}
%\xi \leq 0,\notag
%\end{eqnarray}
 and therefore $Y \leq v$. If we can establish a convergence rate for $v$, we then have a convergence rate for $Y$.

The solution $v$ is
\begin{eqnarray*}
v(t) = e^{-2 b t} \int_1^t e^{2 b s} s^{-2} C ds,
\end{eqnarray*}

We know that
\begin{eqnarray*}
\lim_{t \rightarrow \infty} t^2 v(t)= \frac{1}{2 b}.
\end{eqnarray*}

Therefore, for a finite constant $C<\infty$ we have that
\begin{align}
\limsup_{t \rightarrow \infty} t^2 Y &\leq \limsup_{t \rightarrow \infty} t^2 v \leq C.\notag
%\label{LimSup1}
\end{align}

We also consequently know that there exists a $t_0 >0$ such that for all $t \geq t_0$ and any $0 < p < 1$,
\begin{eqnarray*}
| v(t) | \leq K t^{-2p}.
\end{eqnarray*}

Therefore, for all $t \geq t_0$,
\begin{eqnarray*}
Y(t) \leq v(t) \leq K t^{-2p}.
\end{eqnarray*}
concluding the proof of the lemma.
\end{proof}

Let us now recall that $\Psi(t,x) = \hat u (t,x) - \hat u^{\ast}(t,x)$. Next, let's prove a convergence rate for $\Psi$.

\begin{lemma}\label{L:ConvergenceRatePsi}
Consider the setting of Theorem \ref{T:ConvergenceRate}. Then, we have that
\begin{eqnarray}
\limsup_{t \rightarrow \infty} t^2 \left( \Psi(t), \Psi(t) \right) \leq C,
\label{LimSupPsi}
\end{eqnarray}
for some finite constant $C<\infty$.
\end{lemma}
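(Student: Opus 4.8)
The plan is to mirror, almost verbatim, the energy–comparison argument used for Lemma~\ref{L:Y_estimate}, now applied to $Z(t) := \left( \Psi(t), \Psi(t) \right)$ and to the PDE~(\ref{PsiPDE}) that $\Psi$ satisfies. First I would differentiate $Z$ in time and substitute $\partial_t \Psi = -A^{\dagger}\Psi + \phi - \partial_t \hat u^{\ast}$, which gives
\[
\frac{dZ}{dt} = -2\left( \Psi, A^{\dagger} \Psi \right) + 2\left( \Psi, \phi - \frac{\partial \hat u^{\ast}}{\partial t} \right).
\]
The coercivity of $A^{\dagger}$ from~(\ref{Eq:CoercivityAdjoint}) bounds the first term by $-2\lambda Z$, and for the cross term I would use Cauchy--Schwarz followed by Young's inequality, exactly as in Lemma~\ref{L:Y_estimate}: writing $g := \phi - \partial_t \hat u^{\ast}$, for a parameter $\epsilon>0$ one has $2\left| \left( \Psi, g \right) \right| \le \epsilon^2 Z + \frac{1}{\epsilon^2}\norm{g}_H^2$.

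The only genuinely new input is the decay of the forcing $g = \phi - \partial_t \hat u^{\ast}$. Here I would invoke Lemma~\ref{L:Y_estimate}, whose conclusion $\limsup_{t\to\infty} t^2 \left( \phi(t), \phi(t) \right) \le C$ converts (after possibly enlarging the threshold) into a genuine pointwise bound $\norm{\phi(t)}_H^2 \le C t^{-2}$ for all large $t$; simultaneously, Lemma~\ref{L:DerivativeBound1} combined with $\alpha(t) = 1/(1+t)$ yields $\norm{\partial_t \hat u^{\ast}}_H \le C/(1+t)$, hence $\norm{\partial_t \hat u^{\ast}}_H^2 \le C t^{-2}$. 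Combining these via $(a+b)^2 \le 2a^2 + 2b^2$ produces $\norm{g}_H^2 \le C t^{-2}$ for all $t \ge t_0$. Substituting back, choosing $\epsilon$ small enough that $b_{\epsilon} := \lambda - \epsilon^2/2 > 0$, and writing $C_{\epsilon} := C/\epsilon^2$, I arrive at the differential inequality
\[
\frac{dZ}{dt} \le -2 b_{\epsilon} Z + C_{\epsilon}\, t^{-2}, \qquad t \ge t_0 .
\]

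From this point the argument is identical to the end of Lemma~\ref{L:Y_estimate}. I would introduce the comparison ODE $v' = -2 b_{\epsilon} v + C_{\epsilon} t^{-2}$ with $v(t_0) = Z(t_0)$, set $\xi = Z - v$, and use Gronwall's inequality on $\xi' \le -2 b_{\epsilon}\xi$ with $\xi(t_0)=0$ to conclude $Z(t) \le v(t)$ for $t \ge t_0$. Solving the linear ODE explicitly gives $v(t) = e^{-2 b_{\epsilon} t} \int_{t_0}^t e^{2 b_{\epsilon} s} C_{\epsilon} s^{-2}\, ds$, and an elementary L'H\^{o}pital computation (as in the previous lemma) shows $\lim_{t\to\infty} t^2 v(t) = C_{\epsilon}/(2 b_{\epsilon}) < \infty$. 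Hence $\limsup_{t\to\infty} t^2 Z(t) \le \limsup_{t\to\infty} t^2 v(t) \le C$, which is exactly~(\ref{LimSupPsi}). I expect the only delicate step to be the forcing estimate: the clean $t^{-2}$ rate for $\Psi$ hinges on having the \emph{sharp} $t^{-2}$ rate for $\phi$ from Lemma~\ref{L:Y_estimate} rather than the weaker $e^{-\lambda t}t + \alpha(t)$ bound of Lemma~\ref{L:ConvergenceRatePsi0}, together with the specific choice $\alpha(t) = 1/(1+t)$ that forces $\norm{\partial_t \hat u^{\ast}}_H^2$ to decay like $t^{-2}$; once both contributions are confirmed to be $O(t^{-2})$, the coercivity-plus-comparison machinery carries over unchanged.
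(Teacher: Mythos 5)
Your proposal is correct and is essentially the paper's own argument: the paper simply folds the weight into the energy, working with $W(t) = t^2\left(\Psi(t),\Psi(t)\right)$ and deriving a differential inequality with bounded forcing (absorbing the extra $\frac{2}{t}W$ term for $t\geq t_0$), while you keep the unweighted $\left(\Psi(t),\Psi(t)\right)$ and carry the $t^{-2}$ decay in the forcing instead. Both versions use exactly the same ingredients --- coercivity of $A^{\dagger}$, Young's inequality, the sharp $t^{-2}$ bound on $\left(\phi,\phi\right)$ from Lemma \ref{L:Y_estimate}, the bound $\norm{\partial_t \hat u^{\ast}}_{H}\leq C\alpha(t)$ from Lemma \ref{L:DerivativeBound1}, and an ODE comparison principle --- so the difference is purely a reparametrization.
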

\begin{proof}[Proof of Lemma \ref{L:ConvergenceRatePsi}]
We first calculate that
\begin{eqnarray}
\frac{\partial \Psi(t) }{\partial t }(t,x) &=& - A^{\dagger} \Psi(t,x) + \phi(t,x) - \frac{\partial \hat u^{\ast}}{\partial t}(t,x). \nonumber
\end{eqnarray}

Define $W(t) = t^2 \left( \Psi(t), \Psi(t) \right)$. Then, omitting for notational convenience the time argument, we have
\begin{eqnarray}
\frac{d W}{dt} &=& 2t \left( \Psi, \Psi \right)+ 2 t^2 \left( \Psi, \frac{\partial \Psi}{\partial t} \right) \notag \\
&=& \frac{2}{t} W - 2 t^2 \left( \Psi, A^{\dagger} \Psi \right)  + 2 t^2 \left( \Psi, \phi \right) - 2 t^2 \left( \Psi, \frac{\partial \hat u^{\ast}}{\partial t} \right) \notag \\
&\leq& \frac{2}{t} W - 2 \lambda W + 2 t^2 \left( \Psi, \phi \right) - 2 t^2 \left( \Psi, \frac{\partial \hat u^{\ast}}{\partial t} \right),\nonumber
\end{eqnarray}
where we used the assumed coercivity of $A^{\dagger}$ (consequence of Assumption \ref{A:Assumption0}).
Let's select a $t_0 > 1$ such that $t_0^{-1} < \delta \ll c$. Then, for $t \geq t_0$ and for a constant $C<\infty$ that may change from line to line,
\begin{eqnarray}
\frac{d W}{dt} &\leq& - 2 (\lambda- \delta) W + 2 t^2 \left( \Psi, \phi \right) + 2 t^2 \left|\left( \Psi, \frac{\partial \hat u^{\ast}}{\partial t} \right)\right| \notag \\
&\leq& - 2 (\lambda- \delta) W + C \epsilon^2 t^2 \left( \Psi, \Psi \right)  + C \epsilon^{-2} t^2 \left( \phi, \phi \right) + C \epsilon^{-2} t^2 \left( \frac{\partial \hat u^{\ast}}{\partial t}, \frac{\partial \hat u^{\ast}}{\partial t} \right)\notag \\
&=& - b V +  C t^2 \left( \phi, \phi \right)+ C  t^2 \left( \frac{\partial \hat u^{\ast}}{\partial t}, \frac{\partial \hat u^{\ast}}{\partial t} \right).\nonumber
\end{eqnarray}
where we have chosen an $\epsilon > 0$ such that $b = \lambda - \delta - C \epsilon^2  > 0$. %The constants $C_4 = \epsilon^{-2} C_2$ and $C_5 = \epsilon^{-2} C_3$.

Let's construct the ODE
\begin{eqnarray}
\frac{d \hat{q}}{dt} &=& - b \hat{q} +  C t^2 \norm{\phi}^2_{H}+C t^2 \norm{\frac{\partial \hat u^{\ast}}{\partial t}}^2_{H}, \phantom{....} t \geq t_0, \notag \\
\hat{q}(t_0) &=& V(t_0).\nonumber
\end{eqnarray}

which then, by (\ref{LimSup1}) and Lemma \ref{L:DerivativeBound1}, satisfies
\begin{eqnarray}
\hat{q}(t) &=& e^{-b t} \hat{q}(t_0) +  C e^{-bt} \int_{t_0}^t e^{b s}  s^2 \left(\norm{\phi(s)}^2_{H} + \norm{\frac{\partial \hat u^{\ast}}{\partial t}(s)}^2_{H}\right)ds \notag \\
&\leq& e^{-b t} \hat{q}(t_0) + C e^{-bt} \int_{t_0}^t e^{b s}  ds \notag \\
%=& e^{-b t} q(t_0) + C e^{-b t} \frac{ e^{bs} }{b} \bigg{|}_{s = t_0}^t \notag \\
&\leq& C.\nonumber
\end{eqnarray}

Therefore, using the same ODE comparison principle as before, we have the bound
\begin{eqnarray}
\limsup_{t \rightarrow \infty} t^2 \left( \Psi(t), \Psi(t) \right) \leq C,\nonumber
%\label{LimSupPsi}
\end{eqnarray}
which concludes the proof of the lemma.
\end{proof}

We now present the proof of Theorem \ref{T:ConvergenceRate} on the convergence rate for $\theta$.
\begin{proof}[Proof of Theorem \ref{T:ConvergenceRate}]
Recall that $H(\theta) = \nabla_{\theta \theta} J(\theta)$ is the Hessian matrix. At the stationary point $\theta^{\ast}$, $H(\theta^{\ast})$ is positive definite, i.e. there exists some constant $q > 0$ such that $\xi^{\top} H (\theta^{\ast})\xi \geq q \norm{\xi}_{2}^2$.

Since, according to Theorem \ref{ConvergenceTheorem}, we have already proven convergence, we know that for $\theta^{\ast}$ such that $\nabla J(\theta^{\ast})=0$, we have that $\displaystyle \lim_{t \rightarrow \infty} \theta(t) = \theta^{\ast}$. The parameter updates satisfy
\begin{align}
\frac{d \theta}{dt} &=  - \alpha(t) \nabla_{\theta} J(\theta(t) ) -  \alpha(t) \left( \nabla_{\theta} f ( \theta(t) ),  \Psi(t) \right) \notag \\
&= - \alpha(t) H(\theta^{\ast}) ( \theta(t) - \theta^{\ast} ) - \alpha(t)  \nabla_{\theta} H(\bar{\theta}(t))_{j,k}  ( \theta(t) - \theta^{\ast} )_k ( \theta(t) - \theta^{\ast} )_j  - \alpha(t) \left( \nabla_{\theta} f ( \theta(t) ), \Psi(t) \right), \notag
\end{align}
where $\bar{\theta}(t) \in [ \theta(t), \theta^{\ast} ]$, $\nabla_{\theta} H(\bar{\theta}(t))_{j,k} \in \mathbb{R}^d$, and $ \nabla_{\theta} H(\bar{\theta}(t))_{j,k}  ( \theta(t) - \theta^{\ast} )_k ( \theta(t) - \theta^{\ast} )_j  = \displaystyle \sum_{k,j = 1}^d  \nabla_{\theta} H(\bar{\theta}(t))_{j,k}  ( \theta(t) - \theta^{\ast} )_k ( \theta(t) - \theta^{\ast} )_j$. $H(\theta)$ is the Hessian matrix and $H(\theta)_{j,k}$ is the $(j,k)$-th element of the matrix.

Define
\begin{eqnarray}
V(t) = \norm{ \theta(t) - \theta^{\ast} }_2^2.\nonumber
\end{eqnarray}

$V$ satisfies the ODE
\begin{align}
\frac{d V}{dt} &= 2 (\theta(t) - \theta^{\ast})^{\top} \frac{d \theta}{dt } \notag \\
&=  2 (\theta(t) - \theta^{\ast})^{\top}  \bigg{[} - \alpha(t) H(\theta^{\ast}) ( \theta(t) - \theta^{\ast} ) - \alpha(t)  \nabla_{\theta} H(\bar{\theta}(t))_{j,k}  ( \theta(t) - \theta^{\ast} )_k ( \theta(t) - \theta^{\ast} )_j   -  \alpha(t) \left( \nabla_{\theta} f ( \theta(t) ),  \Psi(t) \right) \bigg{]} \notag \\
&\leq - 2 \alpha(t) q  V(t)    -  2 \alpha(t)  C \norm{ \theta(t) - \theta^{\ast} } V(t)   - 2 \alpha(t)  (\theta(t) - \theta^{\ast})^{\top} \left(   \nabla_{\theta} f ( \theta(t) ), \Psi(t) \right).\nonumber
\end{align}

Since $\displaystyle \lim_{t \rightarrow \infty} \theta(t) = \theta^{\ast}$, there exists a $t_0$ such that for all $t \geq t_0$
\begin{eqnarray}
 q - C \norm{ \theta(t) - \theta^{\ast} }  > b_0 > 0.\nonumber
\end{eqnarray}

Therefore, for $t\geq t_{0}$ large enough, we have
\begin{align}
\frac{ d V}{dt} &\leq - \alpha(t) b_0 V +    \alpha(t) \left|(\theta(t) - \theta^{\ast})^{\top} \left(  \nabla_{\theta} f ( \theta(t) ), \Psi(t) \right) \right|\notag \\
&= - \alpha(t) b_0 V  + \epsilon \alpha(t) \left| (\theta(t) - \theta^{\ast})^{\top} \left(  \nabla_{\theta} f ( \theta(t) ), \frac{\Psi(t)}{\epsilon} \right) \right|\notag \\
&\leq - \alpha(t) b_0 V  + \epsilon^2 \alpha(t)^2 V + C\epsilon^{-2} \left( \Psi, \Psi \right),\nonumber
\end{align}
where we have used Young's inequality and the Cauchy-Schwartz inequality. Here we have set $C=\sup_{\theta\in \mathbb{R}^{d}}\norm{\nabla_{\theta} f ( \theta) }^{2}_{L^{2}(U)}<\infty$ by Assumption \ref{AssumptionLinear1}.

Let's select an $\epsilon$ small enough such that $b = b_0 - \epsilon^2 > 0$. Then,
\begin{eqnarray}
\frac{ d V}{dt} &\leq&  - \alpha(t) b V  + C \left( \Psi, \Psi \right).\nonumber
\end{eqnarray}

Let $\hat{q}$ satisfy the ODE
\begin{eqnarray}
\frac{ d \hat{q}}{dt} &=&  - \alpha(t) b \hat{q}  + C \left( \Psi, \Psi \right), \phantom{....} t \geq t_0, \notag \\
\hat{q}(t_0) &=& V(t_0).\nonumber
\end{eqnarray}

The following comparison principle holds
\begin{eqnarray}
V(t) \leq \hat{q}(t).\nonumber
\end{eqnarray}

Using an integrating factor and $\displaystyle \limsup_{t \rightarrow \infty} t^2 \left( \Psi, \Psi \right) \leq C$ by Lemma \ref{L:ConvergenceRatePsi}, we have that
\begin{eqnarray}
\hat{q}(t) &=& C_1 t^{-b} + C_2 t^{-b} \int_{t_0}^t  s^{b}  \left( \Psi(s), \Psi(s) \right) ds \notag \\
&\leq& C_1 t^{-b} + C_2 t^{-b} \int_{t_0}^t  s^{b-2} s^2 \left( \Psi(s), \Psi(s) \right)   ds \notag \\
&\leq& C_1 t^{-b} + C_2 t^{-b} \int_{t_0}^t  s^{b-2}   ds \notag \\
%&\leq& C_1 t^{-b} + C_2 t^{-b}  \frac{ s^{b-1} }{b -1}  \bigg{|}_{s = t_0}^{s = t} \notag \\
&=& C_1 t^{-b} + C_2 t^{-b}  \bigg{(}  t^{b-1} - t_0^{b-1} \bigg{)} \notag \\
&\leq& C t^{-1},\nonumber
\end{eqnarray}
where the constant $C_2$ may change from line to line and we have used the assumption $C_{\alpha} b > 1$. This concludes the convergence rate proof of $\theta(t)$.
\end{proof}

\end{document}